\documentclass[11pt]{article}
\usepackage{amssymb}
\setlength{\textwidth}{6.5in} \setlength{\oddsidemargin}{-.01in}
\usepackage[brazil]{babel}
\usepackage[latin1]{inputenc}
\usepackage{graphicx,color}
\setlength{\textheight}{9in}
\def\nd{\noindent}

\usepackage{color}
\newtheorem{theorem}{Theorem}[section]

\newtheorem{lemma}{Lemma}[section]

\newtheorem{remark}{Remark}[section]

\newtheorem{corollary}{Corollary}[section]
\newcommand{\fim}{\hfill\rule{2mm}{2mm}}
\def\proof{\mbox {\it Proof~}}
\date{}

\begin{document}
\title{
\vspace{0.5in} {\bf\Large Existence and non-existence of Blow-up
solutions for a non-autonomous problem with indefinite and gradient
terms}}

\author{
{\bf Claudianor O. Alves}\footnote{C.O. Alves was partially supported by CNPq/Brazil 303080/2009-4}\\
{\small \textit{Unidade Acad\^emica de Matem\'atica}}\\
{\small \textit{Universidade Federal de Campina Grande}}\\
{\small \textit{58429-900, Campina Grande - PB - Brazil}}\\
{\small \textit{e-mail address: coalves@dme.ufcg.edu.br}}\\
{\bf\large Carlos A. Santos}\footnote{The author acknowledges the support of PROCAD/UFG/UnB  and FAPDF under grant
PRONEX 193.000.580/2009}\,\, {\bf\large and \,\,  Jiazheng
Zhou}\footnote{The author acknowledges the support of CNPq/Brasil.
}\hspace{2mm}\\
{\it\small Departamento de Matem\'atica}\\
{\it\small Universidade de Bras\'ilia}\\
{\it\small 70910-900 Bras\'ilia, DF - Brasil}\\
{\it\small e-mails: csantos@unb.br,
jiazzheng@gmail.com }\vspace{1mm}}
\date{}
\maketitle \vspace{-0.2cm}

\begin{abstract}
\noindent  We deal with existence and non-existence of non-negative
entire solutions that blow-up at infinity for a quasilinear
 problem depending on a non-negative real parameter. Our main objectives in this paper are to
provide  far more general conditions for existence and non-existence
of solutions. To this end, we explore an associated $\mu$-parameter
convective ground state problem, sub and super solutions method
combined and  an approximation arguments to show existence of
solutions. To show the result of non-existence of solutions, we
follow an idea due to Mitidieri-Pohozaev.
\end{abstract}

\nd {\it \footnotesize 2012 Mathematics Subject Classifications}.\\
\nd {\it \footnotesize Key words}: {\scriptsize Quasilinear
equations, Ground state solution, Existence and Non-existence, Large
solutions, Gradient term.}

\section{Introduction}
\def\theequation{1.\arabic{equation}}\makeatother
\setcounter{equation}{0}

\mbox{}

 We consider the problem
\begin{equation}\label{eq1}
\left\{
\begin{array}{c}
\Delta_pu=a(x)f(u)+\mu b(x)|\nabla{u}|^{\alpha}\  \mbox{in}\ \mathbb{R}^N,\\
u\geq0\ \mbox{on}\ \mathbb{R}^N,\
u(x)\stackrel{|x|\to\infty}{\longrightarrow}\infty,
\end{array}
\right.
\end{equation}
where  $N\geq1$, $\alpha \geq 0$ and $ \mu\geq0$ is a real
parameter, $\Delta_p$ is the $p$-Laplacian operator with $1<p<
\infty$, $f:[0,\infty)\rightarrow [0, \infty)$ is a continuous
function such that $f(t)>0$ for $t>0$; $a,b: \mathbb{R}^N\rightarrow
\mathbb{R}$ are continuous functions with $a$ being nonnegative and $b$ can
change of signal.

A solution of (\ref{eq1}) is meant as a nonnegative function in
$C^1(\mathbb{R}^N)$ that satisfies (\ref{eq1}) in distributional
sense. It is well-known as being a entire large (explosive or
blow-up) solutions.

The research by conditions that lead to the existence, non-existence
and behavior asymptotic of solutions for problem (\ref{eq1}), in
bounded domain, mainly without the dependence of the gradient term,
has been much made recently. However for problem (\ref{eq1}) in
whole space, principally with dependance of the gradient term, there
is a less expressive literature.

It is well-known in the mathematical literary that the issue of
existence and non-existence of solution for problem (\ref{eq1}),
without dependance of gradient term, that is, $\mu =0$ in
(\ref{eq1}), are very sensible to the behavior of the potential $a$
at the infinity. If $a=1$ and $f\geq 0$, Keller \cite{keller} and
Osserman \cite{osserman} proved that problem (\ref{eq1}), with
$p=2$, admits a positive solution if only if $f$ satisfies
$$\ \ \ \ \ \ \
\displaystyle\int_{1}^{\infty} {F}(t)^{-{1}/{p}} dt =\infty,~
\mbox{where}~ {F}(t) = \displaystyle \int_{0}^t {f}(s)ds.$$

In 2000, Lair and Wood considered a such $f$, more specifically
$f(u)=u^q$ with $0<q<1$, and showed in \cite{LW2} that problem
(\ref{eq1}), with $p=2$, $\mu =0$ and $a$ is a radially-symmetric
and nonnegative function, admits a solution if only if
$$\displaystyle\int_0^{\infty} r a(r) dr=\infty.$$

In this sense, that is, when the term $f$ satisfies the above
condition, there are a lot of papers studying the issues about
existence and non-existence of solution for (\ref{eq1}) both in
bounded and unbounded domains without or with dependance of gradient
term. See for example, \cite{Hamydy, BZ,GR} and references therein.

In a similar way, when the term $f$
 satisfies
\begin{enumerate}
  \item [(F)] $\ \ \ \ \ \ \ \displaystyle\int_{1}^{\infty} {F}(t)^{-{1}/{p}} dt < \infty$
\end{enumerate}
the looking for by existence of solutions should occurs by
controlling the decaying fast of $a$ at infinity. The above
condition is known as Keller-Osserman condition.

In this sense, Ye and Zhou \cite{ye} proved that a sufficient
condition for existence of solutions for problem (\ref{eq1}) with
$p=2$, $\mu =0$, $f$ a increasing function satisfying $f(0)=0$ and
($F$) is that $a>0$ be a continuous function such that the problem
$$
\leqno(P)~~~~~~~~~~~~~~ \left\{
\begin{array}{c}
-\Delta w=a(x)\ \  \mbox{in}\ \mathbb{R}^N,\\
w>0\ \mbox{on}\ \mathbb{R}^N,\ w(x)\stackrel{|x|
\to\infty}{\longrightarrow}0,
\end{array}
\right.
$$
admits a solution in $C^1(\mathbb{R}^N)$.

On the other hand, for the particular case $f(u)=u^q$ with $q>1$ and
$a$ being a radial continuous function, it was showed by Taliaferro
in \cite{taliaferro} that the existence of solution for ($P$) is
also a necessary condition for the existence of solution for
(\ref{eq1}) with $p=2$. These results show that the solvability of
problem ($P$) is almost a optimal condition for existence of
solution for problem (\ref{eq1}) with $\mu  =0$, $p=2$ and $f$
satisfying (F).

 For this class of problem, that is, (\ref{eq1}) with $\mu=0$
 and $a$ be a non-negative continuous
function, a natural approach to show existence of solution has been
the sub and super solution technique using an argument of
approximation by auxiliary problems defined in balls centered at
origin of $\mathbb{R}^N$ with radius $k=1,2,\cdots$, namely $B_k$.
So, sub and super solutions for (\ref{eq1}) are constructed and some
kind of comparison principle is used to order them.

Now, we are going to do a small overview about results related to
problems like (\ref{eq1}) with $\mu \neq 0$ in bounded domain and
whole space, which in the most have sign-defined potentials. In
1996, Bandle and Giarrusso \cite{BG} proved existence and studied
behavior asymptotic of solutions for
$$
\left\{
\begin{array}{c}
\Delta u=f(u)\pm|\nabla u|^\alpha\ \ \mbox{in}\ \Omega,\\
u\geq0\ \ \ \mbox{on}\ \Omega,\ u(x)\stackrel{d(x) \to
0}{\longrightarrow}\infty,
\end{array}
\right.
$$
where $\Omega\subset \mathbb{R}^N$ is a smooth bounded domain,
$d(x)$ is the distance of $x$ to the boundary of $\Omega$, either
 $f(u)=u^q$ or $f(u)=e^u$, $q>1$ and $\alpha>0$ is a fixe number.

Em 2006, Zhang \cite{ZZ} studied the problems
$$
\left\{
\begin{array}{c}
\Delta u=a(x)f(u)\pm\lambda|\nabla u|^{\alpha}\ \ \mbox{in}~\Omega,\\
u\geq0\ \ \ \mbox{on}\ \Omega,\ u(x)\stackrel{d(x) \to
0}{\longrightarrow}\infty,
\end{array}
\right.
$$
where range interval of $\alpha>0$ depend on sign $\pm$, $a$
behavior like the unique solution of $-\Delta u = 1$ in $\Omega$
with $u=0$ on the boundary of $\Omega$ and $f$ is like $s^q$ at
infinity for some appropriate $q>0$.

In 2011, Huang, Li, Tian and Mu \cite{HL} studied
$$
\left\{
\begin{array}{c}
\Delta u=a(x)f(u)\pm b(x)|\nabla u|^\alpha,\ \ \mbox{in}\ \Omega,\\
u\geq0\ \ \ \mbox{on}\ \Omega,\ u(x)\stackrel{d(x) \to
0}{\longrightarrow}\infty,
\end{array}
\right.
$$
where $\alpha\geq0,\ a,b\in C^\nu(\Omega)$ for some $\nu\in(0,1)$
with $a$ positive and $b$ non-negative functions that can be
singular or null in the boundary of $\Omega$ and $f$ positive is
such $f(s)/s$, $s>0$ is increasing at infinity.

Recently, Hamydy in \cite{H} considered the p-Laplacian operator and
showed the existence of solution for a problem like
$$
\left\{
\begin{array}{cc}
\Delta_p u = a(x)f(u) +b(x)|\nabla u|^{p-1},\ \  \mbox{in}\ \ \Omega,\\
u \geq 0~~ \mbox{on}~~  \Omega,~~~ u(x) \stackrel{d(x) \to
0}{\longrightarrow} \infty,
\end{array}
\right.
$$
where $b\in L^\infty(\Omega)$ can change of sign, $p\geq2$, $f$ is
continuous and increasing with $\inf_{s>0}f(s)/s^q$ is positive for
some $q>p-1$ and $a(x)\geq a_{\infty} >0$, $x \in \Omega$.

In the whole space, there exists a very few papers studying
existence of solutions. In 1999, Lair and Wood \cite{LW} showed the
existence of solutions for the problem
 $$
\left\{
\begin{array}{cc}
\Delta u = a(x) u^q \pm \vert \nabla u \vert^{\alpha}~~ \mbox{in}~~\mathbb{R}^N ,\\
u \geq 0~~ \mbox{on}~~  \Omega,~~~ u(x) \stackrel{d(x) \to
0}{\longrightarrow} \infty.
\end{array}
\right.
$$
For the positive signal, they assumed for instance $0 \leq a(x) \leq
M\vert x \vert^{-2-\beta}$ for big $\vert x \vert$ and either $q
\leq 1+\beta(1-\alpha)/(2-\alpha)$ with $0 < \alpha <1$ or
$\max\{q,\alpha\}>2$, if $\alpha \geq 1$ and for the negative
problem they assumed $a \geq 0$ and with its zero points enclosed by
a bounded surface of non-zero points satisfying
$$\int_0^{\infty}\max_{\vert x \vert = r} a(x)dr <\infty$$
for $N \geq 3$ and $q>\max\{1,\alpha\}$.

Motivated by the above results, Hamydy, Massar and Tsouli
\cite{Hamydy} in 2011 complemented this last result by considering a
more general $\mu$-parameter problem
$$
\left\{
\begin{array}{cc}
\Delta_p u = a(x) f(u) + \mu\vert \nabla u \vert^{p-1}~~ \mbox{in}~~\mathbb{R}^N ,\\
u \geq 0~~ \mbox{in}~~  \mathbb{R}^N,~~~ u(x) \stackrel{|x| \to
\infty}{\longrightarrow} \infty,
\end{array}
\right.
$$
with $\mu \neq 0$, and they proved existence of solutions for $p>2$
and $a(x)\ge a_{\infty},~x\in \mathbb{R}^N$, for some
$a_{\infty}>0$. However, in this case $f$ not satisfies the
Keller-Osserman condition.

For the non-existence of solutions, there exists very few works. In
1999, Mitidieri and Pohozaev \cite{MP} introduced a test-function
method to prove the non-existence of positive solution for
$$\Delta_pu\geq|x|^{-\delta}u^q\ \ \mbox{in}\ \mathbb{R}^N,$$
where $1<p<N,\ q>p-1$ and $p>\delta>1$. For related problems and by
using different techniques, we quote Lair and Wood \cite{LW2} in
2000, Ghergu and Radulescu \cite{GR} in 2004 and references therein.

In a recent paper, Felmer, Quaas and Sirakov \cite{FQS} by using
appropriate super solutions and comparison principles proved the
non-existence of solutions for the autonomous inequality
$$
\Delta u \geq f(u) +g(|\nabla u|)\ \  \mbox{in}\ \  \mathbb{R}^N,\\
$$
where $f$ and $g$ are increasing continuous functions with
$f(0)=g(0)=0$ and either $f$ does not satisfies Keller-Osserman
condition or $g$ satisfies $\int_1^\infty{d s}/{g(s)}<\infty$.

In the above cases, when the potentials $a$ and $b$ are
non-negative, the operator is elliptic uniformly and its
perturbations has $C^1$-regularities, the classical standard
comparison principles, like that in \cite{GT}, have been used to
compare the sub and super solution of (\ref{eq1}), the solutions of
these auxiliary problems each other and these solutions with the sub
and super solutions. So, the solution is built by a diagonal process
limit.

Since, our principal aim in this paper is to consider the
p-Laplacian operator with $1<p<\infty$ and to establish far more
general conditions under potentials $a$ and $b$ (which can be
non-constant and $b$ can be indefinite potential) in the whole
space, the existence and non-existence of solutions for (\ref{eq1})
cannot obtained by standard comparison principles, at least in a
direct way. The principal difficulty is when $b^+ \ne 0$.

To overcome this, we prove a comparison principle for this class of
problem (see theorem 2.1). Besides this, in general the building of
sub and super solution for problems, with dependance of gradient
term, in whole space in general are not easy, principally because we
need obtain the explosive behavior of the solution at infinity.

To get over these difficulties, we show the existence of a
$\mu$-positive ground state solution for an associated
$\mu$-parameter problem with dependance of gradient term which
allows us constructing an super solution for the problem (\ref{eq1})
whose $L^{\infty}(\mathbb{R}^N)$-norm is controlled by the parameter
 (see lemma 2.2).

Concerning to the non-existence of solutions for (\ref{eq1}), a
natural approach to do this is to construct some appropriate radial
super solution for (\ref{eq1}) and apply some comparison principle.
However, this procedure does not work in our case because neither
standard comparison principles nor our result can not be applied.

So, we exploit an idea, due to Mitidieri and Pohozaev \cite{MP}, by
constructing a test function that is null in the exterior of
appropriate balls of $\mathbb{R}^N$.
 By using this test function carefully constructed in
$C_0^{\infty}(\mathbb{R}^N)$ together the infinity-information on
the nonlinearities we get our result after carefully calculations.

 These improve and complement some the prior results of non-existence not only by it does not
to require global information on the terms but also
by it to permit a more class of the nonlinearities
$f$ and potentials $a$ and $b$. We quote the reader principally to
\cite{MP}, \cite{DP}, \cite{LL} and \cite{AS} for whole space and
\cite{H} and \cite{AG} for bounded domain and references therein.

The main contribution of our work is related to the fact that we
present some forms that the terms $a$ and $b$ should interact to
produce existence or non-existence of solutions for (\ref{eq1})
without assuming $f$ is monotonous. In a some sense, these results show
that these interactions are connected with the solvability of a
problem like
$$
\leqno{(P_{\rho})}~~~~~~~~~~~~~~ \left\{
\begin{array}{c}
-\Delta_pw=\rho(x)\ \  \mbox{in}\ \mathbb{R}^N,\\
w>0\ \mbox{on}\ \mathbb{R}^N,\ w(x)\stackrel{|x|
\to\infty}{\longrightarrow}0,
\end{array}
\right.
$$
with $\rho$ given by an appropriate
combination of the potentials $a$ and $b$.

It is well-known that  ($P_{\rho}$) has a $C^1$-solution, if $1<p<N$
and
$$~~~~~~~~~~~~~~
\begin{array}{c}
\displaystyle\int_1^\infty \Big(
t^{1-N}\int_0^tr^{N-1}\hat{\rho}(r)dr\Big)^{\frac{1}{p-1}}dt<\infty
\end{array}
$$
holds, where ${\hat{\rho}}(r)=\max_{|x|=r}\rho(x)$ and $\rho \in
C(\mathbb{R}^N)$ is a non-negative function. In fact, if $p\geq N$,
the problem ($P_{\rho}$) does not have solution for any function
$\rho\geq0$. See for example Serrin and Zou \cite{serin}.

 Now, we state our
principal results. Before this, we need to consider the following
condition.
\begin{enumerate}
  \item [($P$)$_{\rho}$:] Problem $(P_{\rho})$, with $\rho(x)\!=\!\max\{a(x), \!b^+(x)\}$, $x \in \mathbb{R}^N$, admits a super  solution $z$   belonging to
$$(i)~~~~C^1(\mathbb{R}^N),~\mbox{if}~b^+= 0~\mbox{and}~0\leq \alpha \leq p~~~~~~~~~~
(ii)~~~~C^1(\mathbb{R}^N)\cap
W^{1,\infty}(\mathbb{R}^N),~\mbox{if}~b^+\neq 0~\mbox{and}~\alpha =
p-1.~~~~$$
\end{enumerate}
Throughout all this work we are going to denote by
$b^+(x)=\max\{b(x),\ 0\}$ and $b^-(x)=\max\{-b(x),\ 0\}$, $x \in
\mathbb{R}^N$ as being the positive and negative parts of a function
$b$.

\begin{remark}
In $($P$)$$_{\rho}$-$(ii)$, we note that the existence of a
$C^1(\mathbb{R}^N)$-solution of $($P$_{\rho}$$)$ implies its
$W^{1,\infty}(\mathbb{R}^N)$ regularity, if $\rho \in
L^{\infty}(\mathbb{R}^N)$.
\end{remark}

\begin{theorem}\label{teo2} Assume that $\liminf_{t\to\infty}{f(t)}/{t^q}>0$
 for some  $q>\max\{\alpha, p-1, 1\}$, $(P)_{\rho}$ hold
 and $a, b\in L^\infty_{loc}(\mathbb{R}^N)$ with $a$ satisfying 

$$
\leqno{(a_\Omega):}~\mbox{given a  smooth  bounded open set}\
\Omega\subset\mathbb{R^N}, \mbox{there exists}\ a_\Omega>0\
\mbox{such that}\ a(x)\geq a_\Omega\ \mbox{a.a.}\ \mbox{in}\ \Omega.
$$
Then there exists $0< \mu^* \leq \infty$ such that the problem
$(\ref{eq1})$ admits at least one solution for each $0 \leq \mu <
\mu^*$ given. Besides this, $\mu^*=\infty$, if
$($P$)$$_{\rho}\!\!-\!\!(i)$ holds.
\end{theorem}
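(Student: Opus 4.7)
The plan is to combine a sub-and-super solution argument with an approximation of (\ref{eq1}) by Dirichlet problems on the balls $B_k$, using the comparison principle of Theorem 2.1 to order the approximations, and passing to the limit through a diagonal process. The threshold $\mu^*$ is then defined as the supremum of those $\mu\ge 0$ for which the super solution construction below actually goes through.

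First I would build the two barriers. For the super solution, I use $(P)_\rho$ together with Lemma 2.2. In case (i) with $b^+\equiv 0$ the gradient term contributes non-positively to the right-hand side, and a monotone transformation $\bar u_\mu=\phi(z)$ of the super solution $z$ of $(P_\rho)$ absorbs $a(x)f(u)$ for every $\mu\ge 0$, giving $\mu^*=\infty$. In case (ii) with $\alpha=p-1$, the $\mu$-parameter convective ground state provided by Lemma 2.2 is what absorbs the indefinite term $\mu b^+|\nabla u|^{p-1}$; the construction succeeds only when $\mu$ is small enough for the combined $W^{1,\infty}$-norm to keep the super solution inequality, which yields $0<\mu^*\le\infty$. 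For the sub solution, the assumption $\liminf_{t\to\infty} f(t)/t^q>0$ with $q>\max\{\alpha,p-1,1\}$ gives the Keller-Osserman condition $(F)$, and together with $(a_\Omega)$ one constructs a radial sub solution $\underline u(x)=\psi(|x|)$ with $\psi(r)\to\infty$ as $r\to\infty$, by solving an ODE on concentric shells where $a$ is bounded below by a positive constant.

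Next, for each $k\ge 1$ and each $n\ge\bar u_\mu(0)$ I solve the auxiliary Dirichlet problem
$$
\Delta_p u_{k,n}=a(x)f(u_{k,n})+\mu b(x)|\nabla u_{k,n}|^{\alpha}\ \text{in}\ B_k,\qquad u_{k,n}=n\ \text{on}\ \partial B_k,
$$
by the classical sub-super solution theory for the $p$-Laplacian in bounded domains, using $\underline u$ (suitably adjusted at $\partial B_k$) and $\bar u_\mu+n$ as barriers; this is where $a,b\in L^\infty_{\mathrm{loc}}(\mathbb{R}^N)$ is needed. Theorem 2.1 yields monotonicity in $n$, so as $n\to\infty$ one obtains $u_k\in C^1(B_k)$ with $u_k=\infty$ on $\partial B_k$ and $\underline u\le u_k\le \bar u_\mu+C_K$ on each compact $K\subset B_k$.

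Finally, applying Theorem 2.1 to $u_k$ and $u_{k+1}$ on $B_k$ gives $u_{k+1}\le u_k$ on $B_k$, so on any compact $K\subset\mathbb{R}^N$ the sequence is eventually monotone and locally bounded. Tolksdorff–Lieberman local $C^{1,\beta}$ estimates then provide a $C^1_{\mathrm{loc}}$-convergent subsequence, and a diagonal extraction produces a limit $u\in C^1(\mathbb{R}^N)$ solving (\ref{eq1}) distributionally with $u\ge\underline u$, hence $u(x)\to\infty$ as $|x|\to\infty$. The main obstacle is the super solution step in case (ii): the indefinite gradient term $\mu b|\nabla u|^{p-1}$ cannot be handled by the standard trick $\bar u=\phi(z)$ alone, and one must rely on the $\mu$-convective ground state of Lemma 2.2 and verify that the resulting composite function belongs to $C^1(\mathbb{R}^N)\cap W^{1,\infty}(\mathbb{R}^N)$—which is precisely why hypothesis $(P)_\rho$-(ii) requires this stronger regularity. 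Controlling this verification in terms of $\mu$ is exactly what determines the finite (in general) threshold $\mu^*$.
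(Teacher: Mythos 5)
Your overall architecture (sub/super solutions, approximation on balls, ordering via Theorem 2.1, diagonal extraction, with Lemma 2.2 governing $\mu^*$) matches the paper's in outline, but both barrier constructions --- which are the actual content of the proof --- contain genuine gaps, and you have the roles of the two hypotheses $(a_\Omega)$ and $(P)_\rho$ essentially reversed. For the sub solution: a radial $\underline u=\psi(|x|)$ with $\psi(r)\to\infty$ must satisfy $\Delta_p\psi\ge a(x)f(\psi)+\mu b(x)|\psi'|^\alpha$, and since $f(t)\gtrsim t^q$ with $q>p-1$ is a Keller--Osserman nonlinearity, the Osserman interior estimate bounds any such sub solution on every ball where $a$ is bounded below by a positive constant; so $(a_\Omega)$ works \emph{against} you here, and an entire sub solution blowing up at infinity can exist only because $\rho=\max\{a,b^+\}$ decays in the sense encoded by $(P)_\rho$. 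This is precisely how the paper proceeds: $\underline u$ is obtained as the decreasing limit (via Theorem 2.1) of boundary blow-up solutions $\underline u_n$ on $B_n$ for the monotone majorant $\overline f$ of Lemma 3.2, and the crucial fact $\underline u(x)\to\infty$ is proved by transforming $\underline u_n$ into $\omega^n=\int_{\underline u_n}^\infty(\overline f(t)+1)^{-1/(p-1)}dt$ and dominating $\omega^n$ by the ground state $\omega_\mu$ of Lemma 2.2 --- this is where $(P)_\rho$ and the threshold $\Lambda_*$ (hence $\mu^*$) actually enter, not in a super solution of (\ref{eq1}).

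For the upper barrier: the comparison $u_{k,n}\le \bar u_\mu+n$ is useless for the limit $n\to\infty$, since the right-hand side escapes to $+\infty$; you assert $u_k\le\bar u_\mu+C_K$ on compacts but give no mechanism producing a bound independent of $n$. Moreover any $\bar u_\mu=\phi(z)$ built from the decaying $(P_\rho)$-super solution $z$ is bounded, and a bounded function cannot dominate solutions with boundary data $n\to\infty$. The locally uniform bound must come from \emph{local} super solutions that blow up on $\partial B_k$; in the paper these are the functions $\omega=Mv^{-\beta}+s_0$ of the Claim in the proof of Theorem 3.1, and it is exactly there that $(a_\Omega)$ (the lower bound on $a$) and the restriction $q>\max\{\alpha,p-1,1\}$ are used. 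Two further points you do not address: $f$ is not assumed monotone, so Theorem 2.1 cannot be applied to (\ref{eq1}) directly and the paper must first pass to the monotone envelopes $\underline f,\overline f$ of Lemma 3.2; and the case $(P)_\rho$--$(i)$ requires a separate regularized argument (the $\epsilon$-perturbed operator and the equation with $-\mu b^-(|\nabla u|+1)^\alpha$) because $\alpha$ may differ from $p-1$ there, which your sketch does not distinguish from case $(ii)$.
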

\smallskip

In the sequel, we are interested in considering either $\alpha>p-1$
or potentials $a$ and $b$ such that the problem (\ref{eq1}) has no
sub solution in $C^1(\mathbb{R}^N)$. More
specifically, we will consider the problem
\begin{equation}\label{non}
 \left\{
\begin{array}{c}
 \Delta_pu \geq a(x)f(u)+b(x)|\nabla u|^\alpha,\ \mbox{in}\ \mathbb{R}^N ,\\
u \geq 0 ~\mbox{on}~ \mathbb{R}^N , ~~
u(x)\stackrel{|x|\to\infty}{\longrightarrow}\infty,
\end{array}
\right.
\end{equation}
where $a,b:\mathbb{R}^N \to \mathbb{R}$ are
$L^{\infty}_{loc}(\mathbb{R}^N)$ nonnegative
functions and $f:(0,\infty)\to[0,\infty)$ is an appropriate
function. We are going to denote by $B_R$ the ball centered at
origin of $\mathbb{R}^N$ with radius $R>0$.

\begin{theorem}\label{teo4} Assume one of the below case holds for some $R_0>0$:
\begin{enumerate}
  \item [$(i)$] $a,b>0$ $a.a.$ on $\mathbb{R}^N \setminus
  B_{R_0}$, $\liminf_{t\to\infty}{f(t)}/{t^q}>0$ for some $q>p-1$
  and either
\begin{enumerate}
  \item [$(i_1)$] $~~ \displaystyle\limsup_{R\to\infty}R^{\frac{pq}{\theta-q}}\int_{R\leq|x|\leq 2R}a(x)^{\frac{\theta}{\theta-q}}dx<\infty\  \mbox{for some}\
  \theta\in({p-1},{q})~~\mbox{or}
  $
  \item [$(i_2)$] $~~\displaystyle\limsup_{R\to\infty}R^{\frac{\alpha}{p-1-\alpha}}\int_{R\leq|x|\leq 2R}b(x)^{\frac{p-1}{p-1-\alpha}}dx<
  \infty~\mbox{with}~\alpha>p-1.$
\end{enumerate}
  \item [$(ii)$] $a,b>0$ $a.a.$ on $\mathbb{R}^N \setminus
  B_{R_0}$,  $\liminf_{t\to\infty}{f(t)}/{t^q}>0$ for some $q>0$,
  $\alpha>p-1$
  and
  $$\limsup_{R\to\infty}R^{\frac{\alpha}{\theta-\alpha}}\int_{R\leq|x|\leq 2R}[a(x)^{\frac{\theta}{p-1}-1} b(x)]^{\frac{p-1}{\theta-\alpha}}dx
<\infty~\mbox{for some}~\theta\in(p-1,\alpha).$$
\end{enumerate}
Then problem $(\ref{non})$  has no solution in $C^1(\mathbb{R}^N)$.
\end{theorem}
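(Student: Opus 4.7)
Argue by contradiction, supposing $u\in C^1(\mathbb{R}^N)$ solves $(\ref{non})$. All three cases use the same Mitidieri--Pohozaev test-function scheme, differing only in the auxiliary $u$-power placed in the test function and the choice of Young/H\"older exponents. I describe case $(i_1)$ in detail; the others are analogous.

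From $\liminf_{t\to\infty}f(t)/t^q>0$ and $u(x)\to\infty$, pick $R_1\geq R_0$ and $c_0>0$ so that $f(u(x))\geq c_0 u(x)^q$ for $|x|\geq R_1$; since $a,b\geq 0$ a.e.\ outside $B_{R_0}$, both right-hand terms of $(\ref{non})$ are non-negative there. Let $\psi=\psi_R\in C_c^\infty(\mathbb{R}^N)$ be a radial cutoff with $0\leq\psi\leq 1$, $\psi\equiv 1$ on $B_R\setminus B_{R_1}$, support of $\psi$ inside $B_{2R}\setminus B_{R_1/2}$, $|\nabla\psi|\leq C$ on $B_{R_1}$, and $|\nabla\psi|\leq C/R$ on $B_{2R}\setminus B_R$. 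Set $\mu:=q(\theta-p+1)/(q-\theta)>0$ (the exact value is forced in the next paragraph) and take $\lambda$ large. Testing $(\ref{non})$ against $\varphi=u^\mu\psi^\lambda\geq 0$ and integrating by parts gives
$$
\mu\!\int u^{\mu-1}|\nabla u|^p\psi^\lambda + c_0\!\int_{|x|\geq R_1}\! au^{q+\mu}\psi^\lambda + \!\int bu^\mu|\nabla u|^\alpha\psi^\lambda \leq C_1 + \lambda\!\int_{B_{2R}\setminus B_R}\! u^\mu|\nabla u|^{p-1}\psi^{\lambda-1}|\nabla\psi|,
$$
the constant $C_1$ absorbing the bounded contribution from $B_{R_1}\setminus B_{R_1/2}$ (since $u\in C^1(\overline{B_{R_1}})$).

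On the right apply Young with exponents $p/(p-1)$ and $p$ to split the integrand into $\varepsilon u^{\mu-1}|\nabla u|^p\psi^\lambda + C_\varepsilon u^{\mu+p-1}\psi^{\lambda-p}|\nabla\psi|^p$, absorbing the first term into the gradient piece on the left; then apply H\"older to $\int u^{\mu+p-1}\psi^{\lambda-p}|\nabla\psi|^p$ with conjugate exponents $(q+\mu)/(\mu+p-1)$ and $(q+\mu)/(q-p+1)$ (both $>1$ since $q>p-1$). A direct calculation shows that, for the chosen $\mu$, the $a$-weight that emerges is exactly $a^{-\theta/(q-\theta)}$ and the $R$-scaling is exactly $R^{-pq/(q-\theta)}$, so hypothesis $(i_1)$ bounds the resulting annular integral by a constant; a final Young absorption then yields $I_R:=\int au^{q+\mu}\psi^\lambda\leq C$ uniformly in $R$. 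In case $(i_2)$ one instead drops the $au^{q+\mu}$ term on the left and pairs $|\nabla u|^{p-1}$ with $b|\nabla u|^\alpha$ via Young exponents $\alpha/(p-1)$ and $\alpha/(\alpha-p+1)$, valid since $\alpha>p-1$; case $(ii)$ combines both left-hand terms and uses exponents involving the new parameter $\theta\in(p-1,\alpha)$, producing the composite $a$-$b$ weight appearing in the hypothesis.

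Sending $R\to\infty$ yields $\int_{|x|\geq R_1}au^{q+\mu}<\infty$. For the final contradiction, a reverse-H\"older estimate applied to hypothesis $(i_1)$ on each annulus $R\leq|x|\leq 2R$ gives $\int_{R\leq|x|\leq 2R}a\geq c\,R^{q(N-p)/\theta}$, so that, since $p<N$ (outside this regime, non-solvability of $(P_\rho)$ already precludes the required super solution and makes the theorem vacuous in spirit), $\int_{|x|\geq R_1}a=\infty$; combined with $u(x)\to\infty$ at infinity this forces $\int au^{q+\mu}=\infty$, contradicting the uniform bound just obtained. \textbf{The main obstacle} is the exponent bookkeeping in the middle step: selecting $\mu$ and the Young/H\"older exponents so that simultaneously (i) the $\varepsilon$-term is absorbable into the left-hand gradient piece, (ii) all $\psi$-powers remain non-negative for $\lambda$ large, and (iii) the final $a$-weight and $R$-scaling coincide \emph{exactly} with the integral condition $(i_1)$ (respectively $(i_2)$ or $(ii)$). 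Eliminating every explicit $u$-dependence from the right-hand side---forcing only the critical annular weight to remain---is the technical heart of the Mitidieri--Pohozaev method in the present non-autonomous, gradient-dependent setting.
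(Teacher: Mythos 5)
Your overall scheme is the paper's: test $(\ref{non})$ against $u^{\text{power}}\times(\text{cutoff})^{\text{power}}$, use Young's inequality to absorb the gradient contribution, then H\"older to leave only an annular weight that the hypothesis controls. Your exponent bookkeeping in case $(i_1)$ is in fact consistent: with $\mu=q(\theta-p+1)/(q-\theta)$ the H\"older conjugates $(q+\mu)/(\mu+p-1)$ and $(q+\mu)/(q-p+1)$ do produce the weight $a^{-\theta/(q-\theta)}$ and the scaling $R^{pq/(\theta-q)}$, and the concluding Young absorption $X\le C_1+C_2X^{s}$ with $s<1$ gives the uniform bound. Up to that point you are doing essentially what the paper does (the paper first creates a mixed term $a^{1/\tau'}u^{\beta+q/\tau'-1/\tau}|\nabla u|^{p/\tau}$ by Young and H\"olders against that; the difference is cosmetic).

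The genuine gap is your endgame. The paper does \emph{not} contradict $\int_{|x|\ge R_0}au^{q+\mu}<\infty$ with divergence of $\int a$. Instead, having shown that the relevant weighted integral over $\{|x|\ge R_0\}$ is \emph{finite}, it re-runs the H\"older step with the first factor taken over the outer annulus $R\le|x|\le 2R$ only (which is legitimate because $\nabla\chi$ is supported there); that factor is then the tail of a convergent integral and tends to zero as $R\to\infty$, forcing the weighted integral to equal zero. Since $a,b>0$ a.e.\ on $\mathbb{R}^N\setminus B_{R_0}$ and the integrand contains a positive power of $|\nabla u|$, this yields $\nabla u=0$ a.e.\ outside $B_{R_0}$, so $u$ is constant there, contradicting $u(x)\to\infty$. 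Your replacement argument fails on two counts. First, the reverse-H\"older lower bound $\int_{R\le|x|\le 2R}a\ge cR^{q(N-p)/\theta}$ only yields $\int a=\infty$ when $p\le N$, a restriction the theorem does not make and the hypotheses do not imply: for $N=1$, $p=3$ and $a(x)\sim|x|^{-2}$ condition $(i_1)$ holds while $\int_{|x|\ge1}a<\infty$, so no contradiction arises (and the parenthetical about $(P_\rho)$ being unsolvable for $p\ge N$ is irrelevant here, since Theorem 1.2 concerns $(\ref{non})$ and never invokes $(P_\rho)$). Second, and independently of $p$ versus $N$, your ending cannot be run in cases $(i_2)$ and $(ii)$: there the finite quantity is $\int b|\nabla u|^\alpha$ (respectively a mixed $a$--$b$ weight times $|\nabla u|^{\alpha/\tau}$), and divergence of $\int b$ gives no contradiction because there is no pointwise lower bound on $|\nabla u|$. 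A smaller point in the same direction: your annular cutoff (supported in $B_{2R}\setminus B_{R_1/2}$) introduces the constant $C_1$ from the inner transition region, which would also block the correct "force the integral to zero" conclusion; the paper avoids this by taking the cutoff identically $1$ on all of $B_R$ and simply discarding the non-negative left-hand contributions over $B_{R_0}$.
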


\section{\textbf{Auxiliary Results}}

In this section, we are going to present some very important results
in our approach. In first place, we are going to consider the
inequalities
\begin{equation}\label{subb}
\int_\Omega |\nabla u|^{p-2}\nabla u\nabla \varphi dx+\int_\Omega
[a(x)h(u)+\mu b(x)|\nabla{u}|^\alpha]\varphi dx\leq0,
\end{equation}
and
\begin{equation}\label{supp}
\int_\Omega |\nabla v|^{p-2}\nabla v\nabla \varphi dx+\int_\Omega
[a(x)h(v)+\mu b(x)|\nabla{v}|^\alpha]\varphi dx\geq0,
\end{equation}
for all $\varphi\geq0,\ \varphi\in C_0^\infty(\Omega),$ where
$\Omega \subset \mathbb{R}^N$ is a smooth bounded domain,
$\alpha\geq p-1$, $h:\mathbb{R}\to\mathbb{R}$ is a increasing
continuous function and $a,b\in L_{loc}^{\infty}(\Omega)$ with $a$
satisfying 
$$
\leqno{(a_\Omega)^{\prime}:}~~~\mbox{given a smooth open set}\
O\subset\subset\Omega~\mbox{there exists}\ a_o>0\ \mbox{such that}\
a(x)\geq a_o\ a.a.\ \mbox{in}\ O.
$$
Before proving our first result in this section, we state the below
lemma, whose proof is easy.

\begin{lemma}\label{lema0}
Assume $\alpha \geq 0$. Then for each $\tau>1$ given, there exists a
$\nu=\nu(\tau)>0$ such that
$$
(i)~~t^{\alpha}-1\leq \nu(t-1)^{\alpha},~t\geq \tau~~~~(ii)~~\vert t^{\alpha}-1\vert \leq \tau^{\alpha}-1,~\tau^{-1}<t<\tau.
$$
\end{lemma}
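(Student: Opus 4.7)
The proof should split naturally into the two claims, since both are elementary pointwise estimates about the real function $t\mapsto t^{\alpha}$. For (i) I will study the ratio $g(t):=(t^{\alpha}-1)/(t-1)^{\alpha}$ on $[\tau,\infty)$ and take $\nu(\tau)$ to be its supremum; for (ii) I will split the interval $(\tau^{-1},\tau)$ at $t=1$ and reduce the claim to the AM--GM inequality $\tau^{\alpha}+\tau^{-\alpha}\ge 2$.

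For (i), note first that the case $\alpha=0$ is trivial with $\nu=0$. For $\alpha>0$, the function $g$ is continuous on $[\tau,\infty)$ (since $\tau>1$ keeps the denominator strictly positive) and
$$\lim_{t\to\infty}g(t)=\lim_{t\to\infty}\frac{1-t^{-\alpha}}{(1-1/t)^{\alpha}}=1.$$
A continuous function on $[\tau,\infty)$ with a finite limit at infinity is bounded, so $\nu:=\sup_{t\ge\tau}g(t)<\infty$ will work. It is worth remarking that if $\alpha>1$ and $\tau$ is close to $1$, then $g(\tau)$ can be very large; but this is harmless since $\nu$ is allowed to depend on $\tau$, and the only potential singularity of $g$ (at $t=1$) has been excluded by hypothesis.

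For (ii), I split the range of $t$ at $1$. Because $\alpha\ge 0$ the map $t\mapsto t^{\alpha}$ is non-decreasing, so when $1\le t<\tau$ monotonicity gives $0\le t^{\alpha}-1\le \tau^{\alpha}-1$, which is the desired bound. When $\tau^{-1}<t<1$, monotonicity instead yields $0\le 1-t^{\alpha}\le 1-\tau^{-\alpha}$, so it suffices to verify $1-\tau^{-\alpha}\le \tau^{\alpha}-1$, i.e.\ $\tau^{\alpha}+\tau^{-\alpha}\ge 2$, which is immediate from AM--GM applied to the positive numbers $\tau^{\alpha}$ and $\tau^{-\alpha}$. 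There is really no obstacle here: once one observes that (i) reduces to ``$g$ is bounded away from its only possible singularity at $t=1$'' and that (ii) hinges on $x+x^{-1}\ge 2$, the proof collapses to a few lines, which presumably explains the authors' remark that the argument is easy.
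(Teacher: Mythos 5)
Your proof is correct; the paper itself states this lemma without proof (``whose proof is easy''), and your argument --- bounding the continuous ratio $(t^{\alpha}-1)/(t-1)^{\alpha}$ on $[\tau,\infty)$ via its finite limit $1$ at infinity, and reducing (ii) to $\tau^{\alpha}+\tau^{-\alpha}\ge 2$ --- is a valid elementary verification of exactly the kind the authors had in mind. The only cosmetic point is that in the case $\alpha=0$ you should take $\nu=1$ (or any positive constant) rather than $\nu=0$, since the statement asks for $\nu>0$; the inequality $0\le\nu$ then still holds trivially.
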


\begin{theorem}\label{prop2} \textbf{(A comparison Principle)} \, Assume $a,b$ and $h$ like above. If $u,v\in W^{1,\infty}_{loc}(\Omega)\cap C(\Omega)$
satisfy $(\ref{subb})$ and $(\ref{supp})$ respectively, and
$$
\lim_{x\to y}(u(x)-v(x)) \in [-\infty,0],~\mbox{for each}~y \in
\partial\Omega,
$$
 then  $u\leq v$ in $\Omega$.
\end{theorem}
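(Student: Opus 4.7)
The strategy is by contradiction: assume $\{u>v\}\neq\emptyset$ in $\Omega$ and derive an absurdity by testing the subtracted weak inequalities against $(u-v-\epsilon)^+$. The first observation is that the continuity of $u-v$ in $\Omega$, together with the boundary hypothesis $\lim_{x\to y}(u-v)\in[-\infty,0]$ for every $y\in\partial\Omega$, forces the super-level set $K_\epsilon:=\{x\in\Omega:u(x)-v(x)\geq\epsilon\}$ to be compact in $\Omega$ for every $\epsilon>0$. Hence $\varphi_\epsilon:=(u-v-\epsilon)^+$ lies in $W^{1,\infty}(\Omega)$ with $\mathrm{supp}(\varphi_\epsilon)\Subset\Omega$, and is therefore admissible (after approximation by $C_0^\infty(\Omega)$ elements) as a non-negative test function in (\ref{subb}) and (\ref{supp}).

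Subtracting (\ref{subb}) from (\ref{supp}) tested against $\varphi_\epsilon$ yields
$$
\int_\Omega \bigl(|\nabla u|^{p-2}\nabla u-|\nabla v|^{p-2}\nabla v\bigr)\cdot\nabla\varphi_\epsilon\,dx+\int_\Omega a(x)\bigl(h(u)-h(v)\bigr)\varphi_\epsilon\,dx\leq -\mu\int_\Omega b(x)\bigl(|\nabla u|^\alpha-|\nabla v|^\alpha\bigr)\varphi_\epsilon\,dx.
$$
On $\{\varphi_\epsilon>0\}$ one has $\nabla\varphi_\epsilon=\nabla(u-v)$, so by the standard monotonicity of $\xi\mapsto|\xi|^{p-2}\xi$ (Tolksdorf--Lindqvist), the first integral on the left is $\geq c_p\int|\nabla\varphi_\epsilon|^p\,dx$ when $p\geq 2$, with the analogous weighted estimate when $1<p<2$. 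Since $h$ is increasing and $a\geq 0$, the second integral is also $\geq 0$. Thus both left-hand terms are \emph{signed}, and the entire fight is against the indefinite gradient term on the right.

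To handle that term, use $\mathrm{supp}(\varphi_\epsilon)\Subset\Omega$ and the $W^{1,\infty}_{\mathrm{loc}}$-regularity of $u,v$ to obtain $M:=\max(\|\nabla u\|_{L^\infty(K_\epsilon)},\|\nabla v\|_{L^\infty(K_\epsilon)})<\infty$. Applying Lemma \ref{lema0} pointwise (together with the trivial $\alpha$-H\"older bound when $\alpha\in[p-1,1)$, a subcase which forces $1<p<2$) gives
$$
\bigl||\nabla u|^\alpha-|\nabla v|^\alpha\bigr|\leq C_M\,|\nabla\varphi_\epsilon|^{\min(1,\alpha)}\quad\text{on }\mathrm{supp}(\varphi_\epsilon),
$$
and combined with $b\in L^\infty_{\mathrm{loc}}$ yields a bound of the form $C\int|\nabla\varphi_\epsilon|^{\min(1,\alpha)}\varphi_\epsilon\,dx$ for the modulus of the right-hand side. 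Since $\min(1,\alpha)<p$ (a consequence of $\alpha\geq p-1$), Young's inequality absorbs half of the $p$-Laplacian term and leaves an estimate
$$
\tfrac{c_p}{2}\int|\nabla\varphi_\epsilon|^p\,dx+\int_\Omega a(x)\bigl(h(u)-h(v)\bigr)\varphi_\epsilon\,dx\leq C'\int\varphi_\epsilon^{\,r}\,dx
$$
for an exponent $r=r(p,\alpha)$. A quantitative lower bound $\int a(h(u)-h(v))\varphi_\epsilon\geq a_o\,\omega(\epsilon)\int\varphi_\epsilon\,dx$, obtained from $(a_\Omega)'$ and strict monotonicity of $h$ on bounded intervals (with $\omega(\epsilon):=\min_{K_\epsilon}(h(v+\epsilon)-h(v))>0$), combined with the Poincar\'e inequality on the compact support, then forces $\varphi_\epsilon\equiv 0$; letting $\epsilon\to 0^+$ gives $u\leq v$ in $\Omega$.

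The main obstacle is closing the absorption cleanly: the two signed left-hand side terms scale differently in $\varphi_\epsilon$, and because $b$ can change sign, the gradient-term contribution must be \emph{entirely} absorbed (no sign cancellation is available). The tuning of Young's exponents is dictated by $\alpha\geq p-1$ and by the local Lipschitz regularity of $u,v$; the sub-case $1<p<2$ requires extra care because the $p$-Laplacian monotonicity estimate then carries the weight $(|\nabla u|+|\nabla v|)^{p-2}$, which itself must be controlled by $M$.
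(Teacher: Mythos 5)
Your overall architecture matches the paper's: argue by contradiction, test with the truncation $(u-v-\epsilon)^{+}$ whose support is compact by the boundary hypothesis, use the monotonicity of $\xi\mapsto|\xi|^{p-2}\xi$ (with the $W^{1,\infty}_{loc}$ bounds supplying the degenerate weight when $1<p<2$), and let $(a_\Omega)^{\prime}$ plus the strict monotonicity of $h$ produce the positive term that must beat the indefinite gradient contribution. Your handling of the gradient term is genuinely different from the paper's and is correct: you use the single pointwise bound $\bigl||\nabla u|^{\alpha}-|\nabla v|^{\alpha}\bigr|\le C_{M}|\nabla\varphi_\epsilon|^{\min(1,\alpha)}$ on the compact support followed by Young's inequality, whereas the paper decomposes the support into four regions according to the ratio $|\nabla v|/|\nabla u|$ (via Lemma \ref{lema0}), absorbs the ``comparable gradients'' region into the $a(x)[h(u)-h(v)]$ term by taking $\tau$ near $1$, and closes with H\"older and Sobolev. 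Your route avoids the region decomposition entirely, which is a real simplification.

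The gap is in the endgame. From
$$
\frac{c_p}{2}\int|\nabla\varphi_\epsilon|^{p}\,dx+a_o\,\omega(\epsilon)\int\varphi_\epsilon\,dx\le C'\int\varphi_\epsilon^{\,r}\,dx,\qquad r=\frac{p}{p-\min(1,\alpha)}>1,
$$
nothing forces $\varphi_\epsilon\equiv0$ for a \emph{fixed} $\epsilon$: estimating $\int\varphi_\epsilon^{\,r}\le\|\varphi_\epsilon\|_\infty^{\,r-1}\int\varphi_\epsilon$ only yields $a_o\omega(\epsilon)\le C'\|\varphi_\epsilon\|_\infty^{\,r-1}$, a constraint on the size of $\|\varphi_\epsilon\|_\infty$ rather than a contradiction; and Poincar\'e applied to the gradient term gives at best a bound on $\int|\nabla\varphi_\epsilon|^{p}$ (or, when $r>p$, the dichotomy ``zero or bounded below''), never vanishing, because the Young constant $C'$ carries $\mu$, $\|b\|_{L^\infty}$ and $M$ and need not be small relative to $c_p$. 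Moreover ``letting $\epsilon\to0^{+}$'' goes the wrong way, since $\|\varphi_\epsilon\|_\infty$ increases as $\epsilon$ decreases. The repair — and it is exactly the limit the paper takes, there phrased as $med(\Omega_\varepsilon)\to0$ — is to send $\epsilon\uparrow\overline{\varepsilon}:=\sup_\Omega(u-v)$: the supports are nested, so $M$, $\|b\|_{L^\infty}$, $a_o$ and $\omega(\epsilon)$ stay uniformly controlled while $\|\varphi_\epsilon\|_\infty=\overline{\varepsilon}-\epsilon\to0$; hence $a_o\omega(\epsilon)>C'\|\varphi_\epsilon\|_\infty^{\,r-1}$ for $\epsilon$ near $\overline{\varepsilon}$, forcing $\int\varphi_\epsilon=0$ and contradicting the definition of $\overline{\varepsilon}$. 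With that single change of limit your argument closes.
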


\noindent \textbf{Proof.} In what follows, we argue  by contraction.
Assume that $\omega(x)=u(x)-v(x),\ x\in\Omega$ is such that
$\overline{\varepsilon}=\sup_{\Omega}\omega(x)>0.$ So, for
$\varepsilon\in({\overline{\varepsilon}}/{2},\overline{\varepsilon})$
given, the function $\omega_\varepsilon$ defined by
$\omega_\varepsilon=\max\{0, \omega-\varepsilon\}$ is not null
precisely in
$$\Omega_\varepsilon:=\{x\in\Omega,\
\varepsilon<\omega(x)\leq\overline{\varepsilon}\}.$$ Besides this,
we have
\begin{equation}\label{sz0}
\Omega_{\varepsilon_2} \subset \Omega_{\varepsilon_1} \subset
\Omega_{{{\overline{\varepsilon}}/{2}}},~\mbox{for}~{\overline{\varepsilon}}/{2}<\varepsilon_1<\varepsilon_2<\overline{\varepsilon}
\end{equation}
and
$$
\Omega_\varepsilon\subset\subset\Omega, \,\,\,\, \mbox{that is}, \,\,\,\, \overline{\Omega}_\varepsilon \,\,\, \mbox{is a compact set in} \,\,\, \Omega.
$$

As $\omega_\varepsilon\in W_0^{1,p}(\Omega)$ and
$\omega_\varepsilon\geq0$, we can use it as test function in,
(\ref{subb}) and (\ref{supp}) to obtain
$$\int_{\Omega_\varepsilon}\{|\nabla u|^{p-2}\nabla u-|\nabla v|^{p-2}\nabla v\}\nabla \omega_\varepsilon dx\leq\int_{\Omega_\varepsilon} \{a(x)[h(v)-h(u)]+
\mu b(x)[|\nabla v|^\alpha-|\nabla u|^\alpha]\}\omega_\varepsilon
dx.$$

So, by a classical inequality,
$$c_pM_{\varepsilon}\int_{\Omega_\varepsilon}|\nabla \omega_\varepsilon|^pdx\leq\int_{\Omega_\varepsilon}\{|\nabla u|^{p-2}\nabla u-|\nabla v|^{p-2}\nabla v\}
\nabla \omega_\varepsilon dx,$$ where
$$
 M_\varepsilon:=\left\{
\begin{array}{l}
 {(|\nabla v|_{L^\infty(\Omega_\varepsilon)}+|\nabla
u|_{L^\infty(\Omega_\varepsilon)}+1)^{p-2}},\ \mbox{if}\ 1<p\leq
2,\\
\\
1,\ \mbox{if}\ p\geq 2
\end{array}
\right.
$$
and $c_p$ is a positive constant that it does not depends on
$\varepsilon$. In particular, from (\ref{sz0}),
\begin{equation}\label{sz00}
0<M_{{\overline{\varepsilon}}/{2}} \leq M_\varepsilon \leq
1,~\mbox{for
all}~\varepsilon\in({\overline{\varepsilon}}/{2},\overline{\varepsilon}).
\end{equation}
Hence,
\begin{equation}\label{sz2}
{c}_p
M_\varepsilon\int_{\Omega_\varepsilon}|\nabla\omega_\varepsilon|^pdx\leq\int_{\Omega_\varepsilon}\{a(x)[h(v)-h(u)]+\mu
|b(x)|||\nabla v|^\alpha-|\nabla u|^\alpha|\} \omega_\varepsilon dx.
\end{equation}

Now, given an $\tau>1$, we shall consider the ensuing subsets of
$\Omega_\varepsilon$
\[
G(\tau)=\{x\in\Omega_\varepsilon,\ \nabla u\neq\nabla v,\
|\nabla v|\geq \tau|\nabla u|\}
\]
\[
\tilde{G}(\tau)=\{x\in\Omega_\varepsilon,\ \nabla u\neq\nabla v,\
|\nabla v|\leq
\frac{1}{\tau}|\nabla u|\},
\]
\[
L(\tau)=\{x\in\Omega_\varepsilon,\ \nabla u\neq\nabla v,\
\frac{1}{\tau}|\nabla
u|<|\nabla v|<\tau|\nabla u|\},
\]
and
\[
I(\tau)=\{x\in\Omega_\varepsilon,\ \nabla u=\nabla v\}.
\]
Using (\ref{sz2}) together with the monotonicity of $h$ in $I(\tau)$
and the above sets, we get
\begin{equation}\label{sz3}
\begin{array}{lll}
 {c}_p M_\varepsilon \displaystyle\int_{\Omega_\varepsilon}|\nabla\omega_\varepsilon|^pdx&\leq&\displaystyle\int_{\Omega_\varepsilon}\{a(x)[h(v)-h(u)]+\mu |b(x)|||\nabla v|^\alpha-|\nabla u|^\alpha|\}
\omega_\varepsilon dx\\
\\
&\leq&\displaystyle\int_{G(\tau)}\mu |b(x)|||\nabla v|^\alpha-|\nabla
u|^\alpha|\omega_\varepsilon dx+\int_{\tilde{G}(\tau)}\mu
|b(x)|||\nabla v|^\alpha-
|\nabla u|^\alpha|\omega_\varepsilon dx\\
\\
&+&\displaystyle\int_{L(\tau)}\mu |b(x)|||\nabla v|^\alpha-|\nabla
u|^\alpha|\omega_\varepsilon
dx-\int_{L(\tau)}a(x)[h(u)-h(v)]\omega_\varepsilon dx.\nonumber
\end{array}
\end{equation}
Now, by Lemma \ref{lema0} and (\ref{sz3}),
$$
\begin{array}{lll}
 {c}_p M_\varepsilon \displaystyle\int_{\Omega_\varepsilon}|\nabla\omega_\varepsilon|^pdx&
\leq&\mu\nu\displaystyle\int_{G(\tau)}|b(x)|||\nabla v|-|\nabla
u||^\alpha\omega_\varepsilon
dx+\mu\nu\int_{\tilde{G}(\tau)}|b(x)|||\nabla v|-
|\nabla u||^\alpha\omega_\varepsilon dx\\
\\
&+&\displaystyle\int_{L(\tau)}\mu|b(x)|(\tau^\alpha-1)|\nabla
u|^\alpha\omega_\varepsilon
dx-\int_{L(\tau)}a(x)[h(u)-h(v)]\omega_\varepsilon dx.
\end{array}
$$

Since, $h$ is increasing continuous, we have
$$h(u(x))-h(v(x))\geq
h(v(x)+{\overline{\varepsilon}}/{2})-h(v(x)):=\sigma_\varepsilon\
\mbox{in}\ \Omega_\varepsilon,$$ where
$\sigma_\varepsilon:=\min_{\Omega_\varepsilon}
[h(v(x)+{\overline{\varepsilon}}/{2})-h(v(x))] >0$. Thus, by using
the hypothesis $(a_\Omega)^{\prime}$, there exists
$\tau_{\varepsilon}>1$, enough near of $1$, such that
$$
\mu|b(x)|(\tau_\varepsilon^\alpha-1)
|\nabla u|^\alpha_{L^\infty(\Omega_\varepsilon)}-a(x)[h(u)-h(v)]\leq
\mu|b|_{L^\infty(\Omega_\varepsilon)}(\tau_\varepsilon^\alpha-1)|\nabla
u|^\alpha_{L^\infty(\Omega_\varepsilon)}-a_{\Omega_\varepsilon}\sigma_\varepsilon<
0
$$
in  $L(\tau_{\varepsilon})$. Hence,
$$
{c}_p M_\varepsilon
\displaystyle\int_{\Omega_\varepsilon}|\nabla\omega_\varepsilon|^pdx
\leq \mu\nu\displaystyle\int_{G(\tau_{\varepsilon})}|b(x)|||\nabla
v|-|\nabla u||^\alpha\omega_\varepsilon
dx+\mu\nu\int_{\tilde{G}(\tau_{\varepsilon})}|b(x)|||\nabla v|-
|\nabla u||^\alpha\omega_\varepsilon dx\
$$
from where it follows that
$$
{c}_p M_\varepsilon
\displaystyle\int_{\Omega_\varepsilon}|\nabla\omega_\varepsilon|^pdx
\leq \mu\nu\displaystyle\int_{\Omega_\varepsilon}|b(x)|||\nabla
v|-|\nabla u||^\alpha\omega_\varepsilon dx \leq
\mu\nu\displaystyle\int_{\Omega_\varepsilon}|b(x)||\nabla
\omega_\varepsilon|^\alpha\omega_\varepsilon dx
$$
and so,
\begin{equation}\label{sz6}
\begin{array}{lll}
 {c}_p  M_\varepsilon\displaystyle\int_{\Omega_\varepsilon}|\nabla \omega_\varepsilon|^pdx&\leq&\mu\nu|b|_{L^\infty(\Omega_\varepsilon)}
 \displaystyle\int_{\Omega_\varepsilon}|\nabla \omega_\varepsilon
|^\alpha|\omega_\varepsilon|dx\\
\\
&\leq&\mu\nu|b|_{L^\infty(\Omega_\varepsilon)}\displaystyle\int_{\Omega_\varepsilon}|\nabla
\omega_\varepsilon|^{p-1}|\nabla\omega_\varepsilon|^{\alpha-p+1}|
\omega_\varepsilon|dx\\
\\
&\leq& \mu\nu|b|_{L^\infty(\Omega_\varepsilon)}d_\varepsilon\displaystyle\int_{\Omega_\varepsilon}|\nabla\omega_\varepsilon|^{p-1}|\omega_\varepsilon|dx,\\
\\
&\leq&\mu\nu|b|_{L^\infty(\Omega_\varepsilon)}
d_\varepsilon\left(\displaystyle\int_{\Omega_\varepsilon}|\nabla
\omega_\varepsilon|^p\right)^{\frac{p-1}{p}}\left(\displaystyle\int_{\Omega_\varepsilon}
|\omega_\varepsilon|^p\right)^{\frac{1}{p}}dx\\
\\
&\leq&\mu\nu|b|_{L^\infty(\Omega_\varepsilon)}
d_\varepsilon\left(\displaystyle\int_{\Omega_\varepsilon}|\nabla
\omega_\varepsilon|^p\right)^{\frac{p-1}{p}}\left(\displaystyle\int_{\Omega_\varepsilon}|\omega_\varepsilon|^{p^*}\right)^{\frac{1}{p^*}}
med(\Omega_\varepsilon)^{\frac{1}{N}},
\end{array}
\end{equation}
where
$d_\varepsilon=|\nabla\omega_\varepsilon|^{\alpha-p+1}_{L^\infty(\Omega_\varepsilon)}$
and $med(\Omega_\varepsilon)$ is the  measure of Lebesgue of
$\Omega_\varepsilon$. Again, from (\ref{sz0}),
\begin{equation}\label{sz5}
0<d_\varepsilon \leq
d_{\overline{\varepsilon}/2}~~\mbox{for}~~\overline{\varepsilon}/2<\varepsilon<\overline{\varepsilon}.
\end{equation}
Using the Sobolev imbedding, we know that
$$\left(\int_{\Omega_\varepsilon}|\omega_\varepsilon|^{p^*}\right)^{\frac{1}{p^*}}\leq
d\left(\int_{\Omega_\varepsilon}|\nabla
\omega_\varepsilon|^p\right)^ {\frac{1}{p}},
$$
where $d>0$ is a constant not depending of $\varepsilon$. This
combined with (\ref{sz6}) gives
$${c}_p M_\varepsilon \displaystyle\int_{\Omega_\varepsilon}|\nabla\omega_\varepsilon|^pdx\leq
\mu\nu|b|_{L^\infty(\Omega_\varepsilon)} d_\varepsilon
d\left(\int_{\Omega_\varepsilon}|\nabla\omega_\varepsilon|^p\right)^{\frac{p-1}{p}}
\left(\int_{\Omega_\varepsilon}|\nabla\omega_\varepsilon|^p\right)^{\frac{1}{p}}med(\Omega_\varepsilon)^{\frac{1}{N}},
$$
that is,
$$
 1\leq\displaystyle\frac{\mu\nu|b|_{L^\infty(\Omega_\varepsilon)} d_\varepsilon d}{{c}_p M_\varepsilon}med(\Omega_\varepsilon)^{\frac{1}{N}}.
$$
Thus, by (\ref{sz0}), (\ref{sz00}) and (\ref{sz5}) together with $
|b|_{L^\infty(\Omega_\varepsilon)}\leq
|b|_{L^\infty(\Omega_{\overline{\varepsilon}/2})}$, we get
$$
1\leq\displaystyle\frac{\mu\nu|b|_{L^\infty(\Omega_{\overline{\varepsilon}/2})}
d_{\overline{\varepsilon}/2}d}{{c}_p
M_{\overline{\varepsilon}/2}}med(\Omega_\varepsilon)^{\frac{1}{N}}.
$$
Once that $med(\Omega_\varepsilon)\to 0$ as
$\varepsilon\to\overline{\varepsilon}$, we obtain a contradiction.
Therefore, this proves the theorem.

\begin{lemma}
\label{lema1} Suppose that  $\eta<0$, $\alpha \geq 0$ and
$(P)_{\rho}\!-\!(ii)$ holds. Then, there exist $0 < \Lambda_*<
\infty$ and $\omega=\omega_{\mu}\in C^1(\mathbb{R}^N)$ satisfying
$$
\left\{
\begin{array}{c}
 -\Delta_p\omega\geq a(x)[1+(\omega(x)+1)^\eta/2]+\mu b^+(x)|\nabla\omega|^{\alpha},\ \  \mbox{in}~~\ \mathbb{R}^N ,\\
\omega>0~~  \mbox{in}~~ \mathbb{R}^N ,~~  \omega\stackrel{|x|
\to\infty}{\longrightarrow}0,
\end{array}
\right.
$$
for each $0 \leq \mu < \Lambda_*$ given. Besides this, if $0 \leq
\alpha <p-1$, then $ \Lambda_*=\infty$.
\end{lemma}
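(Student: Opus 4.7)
The plan is to realize $\omega$ as a positive scalar multiple of the supersolution $z$ produced by hypothesis $(P)_{\rho}$-$(ii)$, tuning the scalar against the parameter $\mu$. I would set $L:=\|\nabla z\|_{L^{\infty}(\mathbb{R}^{N})}$, which is finite by $(P)_{\rho}$-$(ii)$, and take $\omega:=Mz$ with $M>0$ to be chosen. The $C^{1}$-regularity, the positivity, and the decay $\omega(x)\to 0$ as $|x|\to\infty$ are inherited directly from $z$, and the $(p-1)$-homogeneity of the $p$-Laplacian gives, in the distributional sense,
\[
-\Delta_{p}\omega \;=\; M^{p-1}(-\Delta_{p}z) \;\geq\; M^{p-1}\max\{a(x),b^{+}(x)\}.
\]

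Next, using $\eta<0$ and $\omega\geq 0$ to bound $a(x)[1+(\omega+1)^{\eta}/2]\leq (3/2)\,a(x)$, together with $|\nabla\omega|^{\alpha}\leq L^{\alpha}M^{\alpha}$, the desired differential inequality will follow once I verify, a.e.\ in $\mathbb{R}^{N}$,
\[
M^{p-1}\max\{a(x),b^{+}(x)\} \;\geq\; (3/2)\,a(x)+\mu L^{\alpha}M^{\alpha}b^{+}(x).
\]
A short pointwise case split on whether $a(x)\geq b^{+}(x)$, majorizing the smaller of $a(x),b^{+}(x)$ by the larger in each case, collapses this to the single scalar condition
\[
\Psi_{\mu}(M) \;:=\; M^{p-1}-\mu L^{\alpha}M^{\alpha} \;\geq\; 3/2.
\]

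It remains to decide when $\Psi_{\mu}(M)\geq 3/2$ is solvable in $M>0$, and this splits naturally according to the sign of $\alpha-(p-1)$. For $0\leq\alpha<p-1$, $\Psi_{\mu}(M)\to\infty$ as $M\to\infty$ for every $\mu\geq 0$, so I immediately obtain $\Lambda_{\ast}=\infty$. For $\alpha=p-1$, $\Psi_{\mu}(M)=(1-\mu L^{p-1})M^{p-1}$, solvable for large $M$ iff $\mu<L^{1-p}$, so $\Lambda_{\ast}=L^{1-p}$. The delicate case is $\alpha>p-1$: $\Psi_{\mu}$ is no longer monotone, so I would locate its unique maximizer $M_{0}(\mu)=[(p-1)/(\mu\alpha L^{\alpha})]^{1/(\alpha-p+1)}$, compute $\Psi_{\mu}(M_{0}(\mu))=((\alpha-p+1)/\alpha)\,M_{0}(\mu)^{p-1}$, and use that $M_{0}(\mu)\to\infty$ as $\mu\to 0^{+}$ to define $\Lambda_{\ast}\in(0,\infty)$ as the largest $\mu$ for which $\Psi_{\mu}(M_{0}(\mu))\geq 3/2$.

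The main obstacle is precisely this last case $\alpha>p-1$, where $M$ cannot be enlarged freely but must be balanced against $\mu$ through the explicit maximizer $M_{0}(\mu)$. Everything else---the distributional form of the inequality, the positivity and decay of $\omega$---follows transparently from the scaling $\omega=Mz$ and the regularity furnished by $(P)_{\rho}$-$(ii)$ (cf.\ Remark~2.1 for why $z$ can be chosen in $W^{1,\infty}(\mathbb{R}^{N})$).
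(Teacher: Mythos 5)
Your proposal is correct, and it takes a genuinely different and more elementary route than the paper. The paper does not scale $z$; it runs $z$ through a Keller--Osserman-type change of variables: it sets $h(s)=2+s^{\eta}$, builds $F(s)=s^{2}/\int_{0}^{s}t\,h(t)^{-1/(p-1)}dt$ (so that $F(s)^{p-1}\geq h(s)$ and $F(s)/s$ is non-increasing), and defines $v$ implicitly by $z(x)+c=\tau_{\infty}^{-1}\int_{0}^{v(x)+1}t/F(t)\,dt$; the convexity/monotonicity of the transformation absorbs the zeroth-order term, the gradient term is handled via $\|\nabla v\|_{\infty}^{-\alpha}|\nabla v|^{\alpha}\leq 1\leq h(v+1)/2$, which yields a single $v$ (independent of $\mu$) and an explicit $\Lambda_{*}=[\tau_{\infty}F(1)\|\nabla z\|_{\infty}]^{-\alpha}$ valid uniformly in $\alpha\geq 0$; the case $0\leq\alpha<p-1$ is then treated exactly as you do, by the scaling $\varpi=\theta v$. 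Your observation that the term $a(x)[1+(\omega+1)^{\eta}/2]$ is simply bounded by $\tfrac{3}{2}a(x)\leq\tfrac{3}{2}\rho(x)$ (since $\eta<0$ and $\omega\geq 0$) makes the nonlinear transformation unnecessary here, and the $(p-1)$-homogeneity of $\Delta_{p}$ does the rest; that machinery would only be forced on you if the singular factor were $\omega^{\eta}$ rather than $(\omega+1)^{\eta}$. Two small remarks: the pointwise case split on $a(x)\gtrless b^{+}(x)$ is not needed, since majorizing both $a$ and $b^{+}$ by $\rho=\max\{a,b^{+}\}$ gives the scalar condition directly; and in the case $\alpha>p-1$ you could avoid the maximizer computation altogether by fixing one $M_{0}$ with $M_{0}^{p-1}\geq 5/2$ and setting $\Lambda_{*}=(M_{0}^{p-1}-3/2)L^{-\alpha}M_{0}^{-\alpha}$, which is how the paper's uniform-in-$\alpha$ threshold effectively arises -- though your optimized $\Lambda_{*}$ is the larger one. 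What the paper's construction buys is a supersolution $v$ that is the same for every $\mu<\Lambda_{*}$ and is bounded by $\tau_{\infty}-1$; what yours buys is brevity and transparency, and since the lemma (and its use in Theorem 1.1) permits $\omega_{\mu}$ to depend on $\mu$, nothing is lost.
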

\noindent \textbf{Proof.} First of all, we let
$h(s)=2+s^\eta$ for $s\geq 0$ and
$$
F(s)={s^2}/{\int_0^s\frac{t}{h(t)^{{1}/({p-1})}}dt},~s>0.
$$
We point out that $F(s)^{p-1}\geq h(s)$  and ${F(s)}/{s}$ is a
non-increasing continuous function in $(0, +\infty)$.

So, we have well-defined the function
$$H(\tau)=\frac{1}{\tau}\int_0^\tau\frac{t}{F(t)}dt-\frac{1}{\tau}\int_0^1\frac{t}{F(t)}dt, \tau \geq 1$$
with $H(1)=0$. Since,
$$
\begin{array}{lll}
\displaystyle \frac{1}{\tau}\int_0^\tau\frac{s}{F(s)}ds&\geq&
\displaystyle\frac{1}{\tau}\int_{{\tau}/{2}}^\tau\frac{s}{F(s)}ds\geq\frac{1}{2}
\frac{{\tau}/{2}}{F({\tau}/{2})}\\
\\
&\geq&\displaystyle
\frac{{\tau}/{2}}{8h({\tau}/{4})^{{1}/({p-1})}}\to+\infty~~\mbox{as}~~\tau
\to \infty,
\end{array}
$$
it follows that $\lim_{\tau\to\infty}H(\tau)=\infty$.

Thus, there exists a $\tau_\infty>0$ such that
$$\frac{1}{\tau_\infty}\int_0^{\tau_\infty}\frac{t}{F(t)}dt>\Vert z\Vert_\infty+\frac{1}{\tau_\infty}\int_0^1\frac{t}{F(t)}dt,$$
where $z\in C^1(\mathbb{R}^N)\cap W^{1,\infty}(\mathbb{R}^N)$ is
giving by the hypothesis $(P)_\rho\!-\!(ii)$.

After this, we can define a function $v\in C^1(\mathbb{R}^N)$ by
\begin{equation}
\label{taui}
z(x)+\frac{1}{\tau_\infty}\int_0^1\frac{t}{F(t)}dt=\frac{1}{\tau_\infty}\int_0^{v(x)+1}\frac{t}{F(t)}dt,~x
\in \mathbb{R}^N \end{equation} and infer that $1 \leq
v(x)+1<\tau_\infty$ for all $x\in\mathbb{R}^N$ and $v(x)\to0$ when
$|x|\to\infty$. Moreover, by a direct computing, we also have
$$
\begin{array}{ccl}
 \displaystyle\int_{\mathbb{R}^N}|\nabla v|^{p-2}\nabla v\nabla\varphi dx&=&\displaystyle\int_{\mathbb{R}^N}\tau_\infty^{p-1}|\nabla z|^{p-2}\nabla z
\nabla\Big(\frac{F(v(x)+1)^{p-1}}{[v(x)+1]^{p-1}}\varphi\Big)dx\\
\\
&-&\displaystyle\tau_\infty^{p}(p-1)\int_{\mathbb{R}^N}\Big(\frac{F(v(x)+1)}{v(x)+1}\Big)^{p-1}\Big(\frac{F(s)}{s}\Big)_{\mid_{(v(x)+1)}}'
|\nabla z|^p\varphi dx\\
\\
&\geq&\tau_\infty^{p-1}\displaystyle\int_{\mathbb{R}^N}\frac{F(v(x)+1)^{p-1}}{[v(x)+1]^{p-1}}\rho(x)\varphi dx\\
\\
&\geq&\displaystyle\int_{\mathbb{R}^N}F(v(x)+1)^{p-1}\rho(x)\varphi
dx\geq\int_{\mathbb{R}^N}\rho(x)h(v(x)+1)\varphi dx\\
\\
&\geq&\displaystyle\int_{\mathbb{R}^N}a(x)(1+(v+1)^{\eta}/2)\varphi
dx +
\displaystyle\frac{1}{2}\int_{\mathbb{R}^N}b^+(x)h(v(x)+1)\varphi
dx.
\end{array}
$$
Since,
$$
\begin{array}{lcl}
&&\displaystyle\int_{\mathbb{R}^N}b^+(x)h(v(x)+1)\varphi dx \geq \displaystyle\int_{\mathbb{R}^N}b^+(x)\Vert\nabla v \Vert^{-\alpha}_{\infty} \vert\nabla v \vert^{\alpha}\varphi dx\\
\\
&\geq&\displaystyle\frac{1}{\tau_\infty^{-
\alpha}}\int_{\mathbb{R}^N}\Big(\frac{v(x)+1}{F(v(x)+1)}\Big)^{\alpha}
\Vert\nabla z \Vert_\infty^{-{\alpha}}b^+(x)|\nabla
v|^{\alpha}\varphi dx,
\end{array}
$$
it follows by monotonicity of $F(s)/s$, $s \geq 0$ that
$$ \int_{\mathbb{R}^N}|\nabla v|^{p-2}\nabla v\nabla\varphi dx\geq
\int_{\mathbb{R}^N}[a(x)[1+(v(x)+1)^\eta/2]+\mu b^+(x)|\nabla
v|^{\alpha}]\varphi dx$$ for all $0\leq\varphi\in
C_0^\infty(\mathbb{R}^N)$ and $0 \leq \mu \leq
\Lambda_*:=[\tau_\infty{F(1)}\Vert\nabla z
\Vert_\infty]^{-{\alpha}}>0$ given.

Beside this, if $0 \leq\alpha<p-1$, then for each $0
\leq \mu <\infty$ given, we define $\varpi(x)=\theta v(x)$,
$x\in\mathbb{R}^N$, where $v\in C^1(\mathbb{R}^N)$ is given by
(\ref{taui}) and
$\theta=\max\{1,({\mu}/{\Lambda_*})^{{1}/({p-1-\alpha})}\}$.

So, computing we have
$$
\begin{array}{lll}
 \displaystyle\int_{\mathbb{R}^N}|\nabla \varpi|^{p-2}\nabla \varpi\nabla\varphi dx&=& \displaystyle\theta^{p-1}\int_{\mathbb{R}^N}|
\nabla v|^{p-2}\nabla v\nabla\varphi dx\\
&\geq&\displaystyle\theta^{p-1}\int_{\mathbb{R}^N}[a(x)(1+\frac{1}{2}(v(x)+1)^\eta)+\Lambda_*b^+(x)|\nabla
v|^{\alpha}]\varphi dx.
\end{array}
$$

Now, it follows from definition of $\theta$ and $\eta <0$ that
$$
\begin{array}{lll}
 1+\frac{1}{2}(\varpi+1)^\eta&=& 1+\frac{1}{2}(\theta v+1)^\eta\leq1+\frac{1}{2}(v+1)^\eta\leq
\theta^{p-1}(1+\frac{1}{2}(v+1))^\eta
\end{array}
$$
and
$$
\begin{array}{lll}
\mu b^+(x)|\nabla \varpi|^\alpha&=&\mu b^+(x)\theta^\alpha|\nabla
v|^\alpha\leq\theta^{p-1}\Lambda_*b^+(x)|\nabla v|^\alpha.
\end{array}
$$

That is,
$$\int_{\mathbb{R}^N}|\nabla \varpi|^{p-2}\nabla v\nabla\varphi dx\geq \int_{\mathbb{R}^N}[a(x)(1+\frac{1}{2}(\varpi(x)+1)^\eta)+
\mu b^+(x)|\nabla \varpi|^\alpha]\varphi dx.$$ This ends our proof.

\section{Existence of solution for (\ref{eq1}) in bounded domain}

In this section, our main goal is proving the existence
of solution for the problem
\begin{equation}\label{3.1}
\left\{
\begin{array}{c}
\Delta_pu=a(x)f(u)+\mu b(x)|\nabla{u}|^\alpha\ \ \ \mbox{in}\ \Omega,\\
u\geq0\ \ \ \mbox{in}\ \Omega,\ u(x)\stackrel{d(x)\to
0}{\longrightarrow}\infty,
\end{array}
\right.
\end{equation}
where $\Omega\subset\mathbb{R}^N$ is a smooth bounded domain, $a,b:
\Omega\rightarrow \mathbb{R}$ are suitable functions with $a\geq0$,
$f: [0,\infty)\rightarrow [0, \infty)$ is a continuous function with
$f(0)=0$, $0 \leq \alpha \leq p,\ \mu\geq0\ \mbox{is a real
parameter}$ and $N\geq1$.

To do this, we need to show the next result.

\begin{lemma}
\label{lema31} Assume that $h \in L^{\infty}(\Omega)$ is a
nonnegative function and $0 \leq \alpha \leq p$ with $p>1$. Then
\begin{equation}\label{zepsilon}
 \left\{
\begin{array}{c}
-div((|\nabla u|^{p-2}+\epsilon)\nabla u)=\mu h(x)(|\nabla u|+1)^\alpha\ \mbox{in}\ \Omega,\\
u\geq0\ \mbox{in}\ \Omega\ \ u=0\ \mbox{on}\ \partial\Omega
\end{array}
\right.
\end{equation}
admits a solution $u=u_{\varepsilon,\mu}\in C^1(\overline{\Omega})$
for each $0 \leq \varepsilon < 1$ and $0\leq\mu <\Lambda^*$ given,
for some $\Lambda^*=\Lambda^*(\Omega)>0$. Besides this, $\Vert
u_{\varepsilon,\mu} \Vert_{\infty} \leq C$ not depending on
$\varepsilon>0$.
\end{lemma}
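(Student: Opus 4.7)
The plan is to cast $(\ref{zepsilon})$ as a Schauder fixed-point problem in $C^1(\overline{\Omega})$ and secure the $\epsilon$-independent $L^\infty$ bound via a superharmonic radial barrier. Define $T:C^1(\overline{\Omega})\to C^1(\overline{\Omega})$ by $T(v)=u$, where $u$ is the unique weak solution in $W^{1,p}_0(\Omega)$ of the frozen-gradient equation obtained by replacing $|\nabla u|^\alpha$ on the right of $(\ref{zepsilon})$ with $|\nabla v|^\alpha$. The operator $u\mapsto -div((|\nabla u|^{p-2}+\epsilon)\nabla u)$ is strictly monotone and coercive from $W^{1,p}_0(\Omega)$ to its dual (the $\epsilon>0$ perturbation also handles the $1<p<2$ case) and the datum lies in $L^\infty(\Omega)$, so existence and uniqueness of $u$ are classical; Lieberman-type $C^{1,\beta}$-regularity, with constants uniform in $\epsilon\in[0,1)$ because $\xi\mapsto|\xi|^{p-2}\xi+\epsilon\xi$ has uniform $p$-Laplacian structure on that range, then gives $u\in C^{1,\beta}(\overline{\Omega})$, and $T$ is continuous and compact via $C^{1,\beta}\hookrightarrow C^1$.

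Next I would construct an $\epsilon$-uniform super solution. Choose $R$ with $\Omega\subset B_R$, and let $\bar u$ be the positive radial solution of $-\Delta_p\bar u=1$ in $B_R$, $\bar u=0$ on $\partial B_R$, which belongs to $C^1(\overline{B_R})$, satisfies $\|\nabla\bar u\|_\infty\le (R/N)^{1/(p-1)}$, and is superharmonic. Then $-\epsilon\Delta\bar u\ge 0$ distributionally, so
$$
-div((|\nabla\bar u|^{p-2}+\epsilon)\nabla\bar u)\ge 1\ \mbox{in}\ B_R\ \mbox{for every}\ \epsilon\in[0,1),
$$
and setting $\Lambda^*:=[\,|h|_\infty((R/N)^{1/(p-1)}+1)^{\alpha}\,]^{-1}$ makes the right-hand side of $(\ref{zepsilon})$ bounded above by $1$ whenever $\mu<\Lambda^*$; thus $\bar u$ is a super solution to $(\ref{zepsilon})$ on $\Omega$ with $\bar u>0$ on $\partial\Omega$, uniformly in $\epsilon$. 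A comparison argument in the spirit of Theorem $\ref{prop2}$, which extends to the regularized operator since the extra $\epsilon\xi$ perturbation only reinforces the monotonicity used there, then gives $T(v)\le\bar u$ whenever $v\le\bar u$; the lower bound $T(v)\ge 0$ follows from $h,\mu\ge 0$ by testing the equation for $T(v)$ against $(T(v))^-$.

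Finally, combining the two-sided bound with the $\epsilon$-uniform $C^{1,\beta}$-estimate, one fixes $M>0$ (depending only on $\Omega,p,\alpha,|h|_\infty,\Lambda^*$) such that the closed convex bounded set $K=\{v\in C^1(\overline{\Omega}):0\le v\le\bar u,\ \|\nabla v\|_\infty\le M\}$ is $T$-invariant, shrinking $\Lambda^*$ if necessary. Schauder's theorem then produces a solution $u_{\epsilon,\mu}\in K$ of $(\ref{zepsilon})$ with $\|u_{\epsilon,\mu}\|_\infty\le\|\bar u\|_\infty=:C$ independent of $\epsilon$. The main obstacle is keeping every estimate $\epsilon$-uniform: the superharmonic choice of $\bar u$ gives $-\epsilon\Delta\bar u$ the favourable sign at the barrier level, while the uniform $p$-Laplacian structure of $\xi\mapsto|\xi|^{p-2}\xi+\epsilon\xi$ on $\epsilon\in[0,1)$ ensures the regularity constants, and hence $M$ and $\Lambda^*$, can be taken independent of $\epsilon$.
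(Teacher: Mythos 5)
Your argument is correct in substance, but it follows a genuinely different route from the paper's. The paper does not freeze the gradient and apply Schauder: it first solves the auxiliary problem $-div((|\nabla u|^{p-2}+\epsilon)\nabla u)=h(x)$ in $W^{1,p}_0(\Omega)$ by Browder--Minty, establishes the $\epsilon$-uniform bound $\Vert\omega_\epsilon\Vert_\infty\leq C$ by a Moser iteration, upgrades this to a uniform $C^{1,\nu}(\overline\Omega)$ bound via Lieberman, defines $\Lambda^*$ in terms of $\Vert\nabla\omega_\epsilon\Vert_\infty$, observes that $\omega_\epsilon$ is then a super solution of (\ref{zepsilon}) while $0$ is a sub solution, and concludes by citing Kura's weak sub-/super-solution theorem for quasilinear equations with gradient dependence. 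Your route replaces both of the paper's main technical inputs: the explicit radial barrier $\bar u$ on $B_R\supset\overline\Omega$ does the work of the Moser iteration and makes the $\epsilon$-independence of the $L^\infty$ bound transparent (the superharmonicity of $\bar u$ giving $-\epsilon\Delta\bar u\geq0$ is the right observation), while the frozen-gradient Schauder scheme replaces the appeal to Kura. What your approach buys is an explicit, checkable $\Lambda^*$ and $C$; what it costs is the verification of the Schauder hypotheses and the untangling of the apparent circularity between the gradient bound $M$ and the threshold $\Lambda^*$ --- this is resolvable exactly as you indicate (take $M$ to be the Lieberman gradient bound valid for all right-hand sides with $L^\infty$ norm at most $1$ and solutions bounded by $\Vert\bar u\Vert_\infty$, then set $\Lambda^*=[\Vert h\Vert_\infty(M+1)^\alpha]^{-1}$), but it should be written out rather than left to ``shrinking $\Lambda^*$ if necessary.'' Two smaller remarks: for $T(v)\leq\bar u$ you do not need anything as strong as Theorem \ref{prop2} (whose hypotheses on $a$ and the monotone nonlinearity are not met here anyway); since the gradient is frozen the right-hand side is a fixed $L^\infty$ function and the standard weak comparison for the strictly monotone map $\xi\mapsto|\xi|^{p-2}\xi+\epsilon\xi$, obtained by testing with $(T(v)-\bar u)^+$, suffices. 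Finally, both your argument and the paper's rest on the uniformity in $\epsilon\in[0,1)$ of the Lieberman $C^{1,\beta}$ estimate, which each asserts without detailed verification of the structure conditions.
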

\begin{proof} First, we note that for each $h \in L^{\infty}(\Omega)$, it follows by theorem of Browder-Minty that there exists a unique
  $\omega_\epsilon\in W_0^{1,p}(\Omega)$ solution of the problem
\begin{equation}\label{omegaepsilon}
 \left\{
\begin{array}{c}
-div((|\nabla u|^{p-2}+\epsilon)\nabla u)=h(x)\ \mbox{in}\ \Omega,\\
u\geq0\ \mbox{in}\ \Omega\ \ u(x)=0\ \mbox{on}\ \partial\Omega.
\end{array}
\right.
\end{equation}
Besides this, taking $-\omega_\epsilon^-$ as a test function, we get
$\omega_\epsilon \geq 0$, since $h \geq 0$.
\smallskip

\noindent {\bf Claim:} $\omega_\epsilon \in L^{\infty}(\Omega)$ and
$\Vert\omega_\epsilon \Vert_\infty\leq C$ for some $C>0$, which does not
depend of $\epsilon>0$.\\
\smallskip

\noindent In fact, first we note that using $ \epsilon \geq 0 $,
$\omega_\epsilon$ as test function and the Sobolev embedding, we
have
\begin{equation}\label{hinfty}
\Vert\omega_\epsilon \Vert_{1,p}\leq C\Vert
h\Vert_\infty^{{1}/{(p-1)}}~\mbox{for some}~C>0.
\end{equation}
So, if $p\geq N$, we get by using Sobolev embedding again that
$\Vert\omega_\epsilon\Vert_\infty \leq C\Vert
h\Vert_\infty^{{1}/{(p-1)}}$.

Now, if $1<p<N$, we are going to denote by $S>0$ the best constant
of the inequality of Sobolev-Poincaré and
 let $L=\Vert h \Vert_\infty^{{1}/{p}}S$. Following the arguments in  \cite{Moser}, we define the increasing
 sequence $(\gamma_{k})$ with $\gamma_{1}>1$, $\gamma_{k}  \stackrel{k\to
\infty}{\longrightarrow} \infty$, $\gamma^*_k $ as
 $$\gamma_1=p^*,\ \
\gamma^*_k=\gamma_k-1+p,\ \
 \gamma_{k+1}={\gamma^*_kp^*}/{p}$$
 and
$$L_1=\Vert\omega_\epsilon\Vert_{p^*}:=\Vert\omega_\epsilon\Vert_{L^{p^*}(\Omega)},\ \ L_{k+1}=L^{\frac{p}{\gamma^*_k}}\gamma_k^{-\frac{1}{\gamma_k^*}}
\Big(\frac{\gamma_k^*}{p}\Big)^{\frac{p}{\gamma_k^*}}L_k^{\frac{\gamma_k}{\gamma_k^*}},$$
where $p^*=pN/(N-p)$, if $1<p<N$ and
$L_1=\Vert\omega_\epsilon\Vert_{p^*} \leq C\Vert
h\Vert_\infty^{{1}/{(p-1)}}$  by using (\ref{hinfty}) together with
Sobolev embedding.

As a consequence of this, we can prove, by a induction process, that
\begin{equation}
\label{1.24}
 \Vert \omega_\epsilon \Vert_{\gamma_k}\leq L_k\ \mbox{for all}\ k,
 ~\mbox{where}~\Vert \cdot \Vert_{\gamma_k}:=\Vert \cdot \Vert_{L_{\gamma_k}(\Omega)}.
 \end{equation}
To do this, we are going to  consider a  $\psi_n \in
C^1([0,\infty))$ such that
 $0\leq\psi'_n(t)\leq 1$, $\psi_n(t)= t,\ |t|\leq n$ and $\psi_n(t)= n+2,\ |t|\geq
 n+2$ for each $n\in\mathbb{N}$ and to define
 $u_n=\psi_n(\omega_\epsilon)$. So we have
 $0 \leq u_n\leq\omega_\epsilon$ in $\Omega$ and
 $u_n^{l}\in W_0^{1,p}(\Omega)\cap L^\infty(\Omega)$ for each $l\in [1,\infty)$.

Now, by induction hypothesis, we have
$$\int_\Omega h(x)u_n^{\gamma_k}dx\leq \Vert h\Vert_\infty \Vert u_n\Vert_{\gamma_k}^{\gamma_k}\leq\Vert h\Vert_\infty\Vert\omega_\epsilon\Vert_{\gamma_k}^
{\gamma_k}\leq\Vert h\Vert_\infty L_k^{\gamma_k}$$ and by
definitions of $\psi_n$,  $(\gamma_{k})$ and $(\gamma^*_k)$, we have
$$
\begin{array}{lcl}
 &&\!\!\!\!\!\!\!\!\!\!\gamma_k\displaystyle\int_\Omega(|\nabla\omega_\epsilon|^{p-2}+\epsilon)|\nabla\omega_\epsilon|^2\psi'_n(\omega_\epsilon)u_n^{\gamma_k-1}dx\geq
\gamma_k\displaystyle\int_\Omega|\nabla\omega_\epsilon|^p\psi'_n(\omega_\epsilon)u_n^{\gamma_k-1}dx\\
&\geq&\gamma_k\displaystyle\int_\Omega|\nabla
u_n|^pu_n^{\gamma_k-1}dx=\gamma_k\Big(\frac{p}{\gamma_k^*}\Big)^p\int_\Omega|\nabla(u_n^{\frac{\gamma_k^*}{p}})|^pdx\geq
S^{-p}\gamma_k\Big(\frac{p}{\gamma_k^*}\Big)^p\Vert
u_n^{\frac{\gamma_k^*}{p}}\Vert^p_{p^*}.
\end{array}
$$

So, using $u_n^{\gamma_k}$ as a test function in
(\ref{omegaepsilon}), it follows

$$|u_n^{\frac{\gamma_k^*}{p}}|_{p^*}^p\leq S^p\gamma_k^{-1}\Big(\frac{\gamma_k^*}{p}\Big)^p\Vert h\Vert_\infty L_k^{\gamma_k},$$
that is, by definition of $(\gamma_{k})$ and $(\gamma^*_k)$, we have

$$\Vert u_n \Vert^{\gamma_k^*}_{\gamma_{k+1}}\leq S^p\gamma_k^{-1}\Big(\frac{\gamma_k^*}{p}\Big)^p\Vert h\Vert_\infty L_k^{\gamma_k}=L^p\gamma_k^{-1}
\Big(\frac{\gamma_k^*}{p}\Big)^pL_k^{\gamma_k}=
L_{k+1}^{\gamma_k^*}.$$

Now, doing $n \to \infty$, we get  $\Vert \omega_\epsilon
\Vert_{\gamma_{k+1}}\leq L_{k+1}$. This proves (\ref{1.24}).

Below, we are going to show that $(L_{k})$ is bounded. To do this we
are going to define $(E_k)$ as $E_{k}=\gamma_{k}\ln L_{k}$. So,
$$
\begin{array}{ccl}
 E_{k+1} &=&\frac{\gamma_k^*p^*}{p}\Big[\frac{p}{\gamma_k^*}\ln
L-\frac{1}{\gamma_k^*}\ln\gamma_k+\frac{p}{\gamma_k^*}\ln
\gamma_k^*-
\frac{p}{\gamma_k^*}\ln p+\frac{\gamma_k}{\gamma_k^*}\ln L_k\Big]\\
\\
&\leq&\frac{\gamma_k^*p^*}{p}\Big[\frac{p}{\gamma_k^*}\ln L+\frac{p}{\gamma_k^*}\ln \gamma_k^*+\frac{\gamma_k}{\gamma_k^*}\ln L_k\Big]\\
\\
&=&p^*\ln (L\gamma_k^*)+\frac{p^*}{p}E_k:=r_k+aE_k,
\end{array}
$$
where $r_k=p^*\ln(L\gamma_k^*)$ and $a={p^*}/{p}>1$.

As a consequence of this, we have
\begin{equation}
\label{1.25}E_k\leq r_{k-1}+ar_{k-2}+\dots+a^{k-2}r_1+a^{k-1}E_1.
\end{equation}

Besides this,
$$
\begin{array}{ccl}
 \gamma_k&=&\gamma_{k-1}^*a=(\gamma_{k-1}-1+p)a=\gamma_{k-2}^*a^2+(p-1)a\\
  &=&\gamma_{k-2}a^2+(p-1)a^2+(p-1)a = \dots\dots \\
&=&\gamma_1a^{k-1}+(p-1)a^{k-1}+(p-1)a^{k-2}+\dots+(p-1)a\\
&=&a^{k-1}(p^*-\theta)+\theta,
\end{array}
$$
where $\theta={a(p-1)}/{(1-a)}={p^*(1-p)}/{(p^*-p)}<0$. Hence,
$$
 r_k=p^*\ln (L\gamma_k^*)
=p^*\ln L[a^{k-1}(p^*-\theta)+\theta-1+p] $$ with $\theta-1+p<0.$

So,
$$
r_k\leq p^*\ln [La^{k-1}(p^*-\theta)]=p^*(k-1)\ln a+b,
$$
where $b:=p^*\ln [L(p^*-\theta)]$.

Now, as a consequence of this in (\ref{1.25}), we have
$$
\begin{array}{ccl}
 E_k&\leq&a^{k-1}E_1+\sum_{i=1}^{k-1}a^{i-1}r_{k-i}\\
 \\
&\leq&a^{k-1}E_1+p^*\ln a\sum_{i=1}^{k-1}(k-i-1)a^{i-1}+b\sum_{i=1}^{k-1}a^{i-1}\\
\\
&\leq& a^{k-1}E_1+p^*\ln a \Big(\frac{a^{k-1}-1}{(a-1)^2}\Big) +
b\Big( \frac{a^{k-1}-1}{a-1}\Big),
\end{array}
$$
because we used in last inequality
$$\sum_{i=1}^{k-1}(k-i-1)a^{i-1}\leq\frac{a^{k-1}-1}{(a-1)^2}~~~\mbox{and}~~~\sum_{i=1}^{k-1}a^{i-1}=\frac{a^{k-1}-1}{a-1}.$$

Therefore, we have

$$E_k\leq a^{k-1}E_1+\frac{\{b(a-1)+p^*\ln
a\}(a^{k-1}-1)}{(a-1)^2}.$$ That is,
$$L_k\leq e^{\frac{a^{k-1}E_1+\{b(a-1)+p^*\ln a\}(a^{k-1}-1)/(a-1)^2}{a^{k-1}(p^*-\theta)+\theta}},~\mbox{for each}~k \in \mathbb{N}.$$

Hence,
$$\Vert\omega_\epsilon \Vert_\infty\leq\limsup_{k\to\infty}\Vert\omega_\epsilon\Vert_{\gamma_k}\leq\limsup_{k\to\infty}L_k \leq e^d,$$
where $d=[{E_1+\{b(a-1)+p^*\ln a\}/(a-1)^2}]/[{p^*-\theta}]$ is
bounded above by a constant not depending on $\epsilon$, because
$E_1=\gamma_1\ln L_1$ and $L_1$ is bounded above by a constant
independent of  $\epsilon$. This proves the claim.

As a consequence of this claim, we have by Lieberman \cite{G} that
$\omega_\epsilon\in C^{1,\nu}(\overline{\Omega})$ for some $0<\nu<1$
and $\Vert \omega_\epsilon \Vert_{C^{1,\nu}(\overline{\Omega})} \leq
C$, where $C$ does not depend on $\varepsilon>0$. So, we can define
$$\Lambda^*:=\Lambda^*(\Omega):=\displaystyle
\sup\{(\Vert\nabla\omega_\epsilon\Vert_\infty+1)^{-\alpha}~/~0<\varepsilon<1\}>0.$$

Now, given $0\leq\mu < \Lambda^*$, we have
$$-div((|\nabla\omega_\epsilon|^{p-2}+\epsilon)\nabla\omega_\epsilon)=h(x)\geq\mu h(x)(|\nabla\omega_\epsilon|+1)^\alpha\ \mbox{in}\
\Omega,$$ that is, $\omega_\epsilon$ is a super solution of
(\ref{zepsilon}). Beside this, since $\underline{z}=0 \leq
\omega_\epsilon$ is a sub solution of (\ref{zepsilon}), it follows
by sub and super solution theorem in \cite{K} and regularities
results in \cite{G} the proof of lemma.
\end{proof}
\smallskip

From now on, let us say that $a$ is a \textbf{ $c_{\Omega}$-positive function},  if the following property holds:
$$
 \mbox{If} \,\,  a(x_0)=0~\mbox{for some}~x_0\in \Omega,~\mbox{then there exists }~\Theta\subset \subset \Omega~\mbox{such that}~x_0\in
\Theta~\mbox{and}~a(x)>0~\mbox{on}~ \partial \Theta.
$$

\noindent The below theorem complements the principal results in
Bandle and Giarrusso \cite{BG} by permitting $p
\neq 2$ and non-autonomous potentials $a$ and $b$ and Hamydy
 \cite{H} (and works quoted therein), because it permits $1<p
<\infty$, $\alpha \neq p-1$, non-monotonous term $f$ and more
general terms $a$
\begin{theorem}\label{teo1}
 Suppose $1<p<\infty$, $0\leq\alpha\leq p$, $\liminf_{t\to\infty}{f(t)}/{t^q}>0$ for some $q>\max\{\alpha, p-1, 1\}$, $b\in L^\infty(\Omega)$
and either
$$(a_1)~~a\in C(\Omega)\cap L^\infty(\Omega)~\mbox{is a}~
c_\Omega\!-\!\mbox{positive function}~~~\mbox{or}~~~(a_2)~~ a\in
L^\infty(\Omega)~\mbox{is such
that}~(a_\Omega)^{\prime}~\mbox{holds}.$$ Then,
there exists $0<\mu_*\leq\infty$ such that the problem $(\ref{3.1})$
has at least a solution $u=u_\mu\in C^1(\Omega)$ for each $0\leq\mu
< \mu_*$ given. Besides this, $\mu_*=\infty$, if $(a_2)$ holds.
\end{theorem}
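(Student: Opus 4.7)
The plan is to construct the blow-up solution as a limit of solutions of boundary-data approximations
$$\Delta_p u_n = a(x) f(u_n) + \mu b(x) |\nabla u_n|^\alpha \text{ in } \Omega, \qquad u_n = n \text{ on } \partial\Omega,$$
and let $n\to\infty$. For each $n$, the constant $\underline{u}\equiv 0$ is a sub solution (as $f(0)=0$), while $\bar u\equiv n$ is a super solution because $\Delta_p n=0 \leq a(x) f(n)$. Existence of $u_n\in C^1(\overline{\Omega})$ with $0\leq u_n\leq n$ would be obtained by first solving the $\varepsilon$-regularized problem, as in Lemma \ref{lema31}, via a Leray--Lions/fixed-point argument using the gradient-dependent right-hand side, and then letting $\varepsilon\to 0$ using Lieberman's $C^{1,\nu}$ estimates. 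The parameter $\mu_*>0$ appears as the largest range of $\mu$ for which this fixed-point procedure has a uniformly controlled gradient, analogously to $\Lambda^*(\Omega)$ in Lemma \ref{lema31}; under $(a_2)$ the uniform positivity of $a$ on compact subdomains lets the superlinear term $a f(u_n)$ absorb any $\mu b |\nabla u_n|^\alpha$, yielding $\mu_*=\infty$.

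Next I would establish the monotonicity $u_n\leq u_{n+1}$ in $\Omega$. When $\alpha\geq p-1$, Theorem \ref{prop2} applies directly: on the interval $[0,\max\|u_n\|_\infty]$ one replaces $f$ by a continuous strictly increasing modification $h$ agreeing with $f$ at the relevant values, which does not affect the weak inequalities. For $0\leq\alpha<p-1$, the corresponding comparison follows from a standard $p$-Laplacian argument since the gradient term has subcritical growth relative to the leading part.

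The decisive step is a uniform local $L^\infty$-bound: for every $K\subset\subset\Omega$ there exists $C(K)>0$ with $u_n\leq C(K)$ on $K$ for all $n$. Given the Keller--Osserman-type hypothesis $\liminf f(t)/t^q>0$ with $q>\max\{\alpha,p-1,1\}$, I would construct on each ball $B_{2R}(x_0)\subset\subset\Omega$ an Osserman-type radial barrier $v(x)=C_0\bigl[R^2-|x-x_0|^2\bigr]^{-\beta}$ with $\beta$ chosen so that $\Delta_p v$ is of order $v^q$ and $C_0$ sufficiently large. Under $(a_2)$ one has $a\geq a_\Omega>0$ on $B_{2R}(x_0)$, and the strict inequality $q>\alpha$ together with $q>p-1$ guarantees $a_\Omega f(v)$ dominates $\mu|b|_\infty|\nabla v|^\alpha$ near $\partial B_R(x_0)$, making $v$ a super solution on $B_R(x_0)$ with $v=+\infty$ on $\partial B_R(x_0)$. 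Comparison (Theorem \ref{prop2} when $\alpha\geq p-1$, classical otherwise) yields $u_n\leq v$ on $B_R(x_0)$, bounding $u_n(x_0)\leq C_0 R^{-2\beta}$. Under $(a_1)$, for each zero $x_0$ of $a$ the $c_\Omega$-positive property provides $\Theta\subset\subset\Omega$ with $x_0\in\Theta$ and $a>0$ on $\partial\Theta$; on $\partial\Theta$ one applies the previous case, and then comparison inside $\Theta$ completes the bound.

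Once the local bound is in place, Lieberman's $C^{1,\nu}_{loc}$ estimate \cite{G} allows extraction of a $C^1_{loc}(\Omega)$-limit $u\in C^1(\Omega)$ of the monotone sequence $u_n$, solving (\ref{3.1}) in the distributional sense. The boundary blow-up $u(x)\to\infty$ as $d(x)\to 0$ follows from $u\geq u_n$ in $\Omega$ and the continuity of each $u_n$ up to $\partial\Omega$ with $u_n\equiv n$ there: fixing $M$ and using continuity, $u\geq u_M\geq M/2$ on a boundary strip depending on $M$. The principal obstacle will be the construction of the local barrier when $b$ is indefinite and $\alpha$ is close to $p$, where the gradient term is most competitive; here the availability of the nonstandard comparison Theorem \ref{prop2}, rather than the classical $C^1$ comparison, is what makes the argument go through.
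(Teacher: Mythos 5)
Your overall architecture (approximation by problems with boundary data $n$, a local Osserman-type barrier built from $v^{-\beta}$, monotone limit, blow-up from $u\geq u_M$) matches the paper's, but there are two genuine gaps, both concentrated in the case $(a_1)$, which is exactly where the theorem is delicate. First, your monotonicity step $u_n\leq u_{n+1}$ cannot be obtained from Theorem \ref{prop2}: that comparison principle requires $a$ to satisfy $(a_\Omega)^{\prime}$, i.e.\ to be locally bounded below by positive constants, which fails precisely when $a$ has interior zeros as allowed under $(a_1)$; replacing $f$ by an increasing modification does not repair this, and for $0\leq\alpha<p-1$ with an indefinite $b$ there is no ``standard $p$-Laplacian argument'' either --- the degeneracy of $\Delta_p$ together with the gradient term is the very obstruction the paper is organized around. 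The paper avoids the issue entirely by never comparing two independently constructed solutions: it builds the sequence iteratively by the sub/super solution method of Kura, using $\zeta_{k-1}^{\epsilon}$ as the sub solution for the $k$-th problem, so the ordering $(\ref{1.241})$ is part of the construction, and all genuine comparisons are performed at the level of the $\epsilon$-regularized (uniformly elliptic) operator, where the classical principle in \cite{GT} applies.

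Second, and more seriously, your treatment of a zero $x_0$ of $a$ under $(a_1)$ --- ``on $\partial\Theta$ one applies the previous case, and then comparison inside $\Theta$ completes the bound'' --- does not go through when $b^-\neq 0$. Inside $\Theta$ the approximants only satisfy $\mathrm{div}((|\nabla u|^{p-2}+\epsilon)\nabla u)\geq a(x)\underline{f}(u)-\mu b^{-}(x)(|\nabla u|+1)^{\alpha}$, and where $a$ vanishes the constant $A$ obtained on $\partial\Theta$ is \emph{not} a super solution of the corresponding inequality, since $0\leq -\mu b^{-}(x)\cdot 1$ fails wherever $b^{-}>0$. This is exactly why the paper proves Lemma \ref{lema31}: it supplies a bounded solution $u_{\epsilon,\mu}$ of $(\ref{zepsilon})$ with $h=b^{-}$, and the barrier used inside $\Theta$ is $A+u_{\epsilon,\mu}$, not $A$. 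Since $(\ref{zepsilon})$ is solvable only for $\mu<\Lambda^{*}(\Omega)$, this step is the true origin of the finite threshold $\mu_{*}$ in case $(a_1)$; your proposal instead attributes $\mu_{*}$ to a gradient bound in the fixed-point procedure, which misses the actual mechanism and leaves the interior bound near the zero set of $a$ unproved.
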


In the proof of the above result, we need of the following technical lemma

\begin{lemma}\label{lema2} Assume  $h:\ [0,\infty)\rightarrow [0, \infty)$ is a continuous function such that
$h(t)>0$ for $t>0$, $h(0)=0$ and
$$\liminf_{s\to\infty}\frac{h(s)}{s^q}>0,\ \mbox{for some}\ q>0.$$
Then there exist increasing functions $\underline{h}, \overline{h}:\
[0,\infty)\rightarrow [0, \infty)$ in $C^1(0,\infty)\cap
C[0,\infty)$ satisfying $\underline{h}(0)=\overline{h}(0)=0$,
$\underline{h}(t) \leq h(t)\leq\overline{h}(t)$, $t>0$,
$$\liminf_{s\to\infty}\frac{\underline{h}(s)}{s^q}>0~~\mbox{and}~~\liminf_{s\to\infty}\frac{\overline{h}(s)}{s^q}>0.$$
\end{lemma}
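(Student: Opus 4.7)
The natural candidates to work with are
$$
\overline{h}_0(t):=\max_{0\le s\le t}h(s)\ \mbox{ and }\ \underline{h}_0(t):=\min_{s\ge t}h(s),\quad t>0,
$$
together with $\underline{h}_0(0):=0$. The hypothesis $\liminf_{s\to\infty}h(s)/s^q>0$ provides $c_0>0$ and $T\ge 1$ with $h(s)\ge c_0 s^q$ for $s\ge T$; in particular $h(s)\to\infty$, so the defining minimum of $\underline{h}_0(t)$ is attained at some finite point for every $t>0$. I first check that $\overline{h}_0$ and $\underline{h}_0$ already fulfil every requirement on $\overline h,\underline h$ except $C^1$-regularity. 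Both functions are non-decreasing by construction; $\underline{h}_0\le h\le\overline{h}_0$ by definition; both vanish at $0$ (by continuity of $h$ at $0$ for $\overline{h}_0$, and by the sandwich $0\le\underline{h}_0\le h$ for $\underline{h}_0$); and both are continuous on $(0,\infty)$ by a compactness argument using the continuity of $h$ and the blow-up of $h$ at infinity. For $t\ge T$ the estimates $\overline{h}_0(t)\ge h(t)\ge c_0 t^q$ and $\underline{h}_0(t)\ge\inf_{s\ge t}c_0 s^q=c_0 t^q$ deliver the $\liminf$ growth.

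To gain $C^1$-regularity I mollify with a smooth kernel of shrinking support placed on the correct side. Fix $\eta\in C_c^\infty(\mathbb{R})$ with $\eta\ge 0$, $\int\eta=1$, $\mathrm{supp}\,\eta\subset(-1,0)$, and set $\epsilon(t):=t/2$. Define, for $t>0$,
$$
\overline{h}(t):=\int_{-1}^{0}\overline{h}_0\bigl(t-\epsilon(t)s\bigr)\eta(s)\,ds,\quad \underline{h}(t):=\int_{0}^{1}\underline{h}_0\bigl(t-\epsilon(t)s\bigr)\eta(-s)\,ds,
$$
and extend by $\overline h(0)=\underline h(0)=0$. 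For $s\in(-1,0)$ one has $t-\epsilon(t)s\ge t$, so the monotonicity of $\overline{h}_0$ forces $\overline{h}(t)\ge\overline{h}_0(t)\ge h(t)$; symmetrically $\underline{h}(t)\le\underline{h}_0(t)\le h(t)$. A change of variable $u=t-\epsilon(t)s$ rewrites each convolution as an integral against a kernel that is $C^\infty$ in $t$ on $(0,\infty)$, which gives $\overline{h},\underline{h}\in C^\infty(0,\infty)$; since the effective integration variable $u$ lies in $[t/2,3t/2]$, continuity at $0$ follows from the continuity of $\overline h_0,\underline h_0$ at $0$. Monotonicity is preserved because the integration window translates rigidly to the right as $t$ grows, and the $q$-growth at infinity is preserved up to a harmless constant factor.

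If the statement demands \emph{strict} monotonicity, as will be needed in the comparison-principle argument where the constant $\sigma_\varepsilon$ has to be positive, one replaces $\overline h$ by $\overline h+\delta\psi$ and $\underline h$ by $(1-\delta)\underline h$, with $\psi(t):=t/(1+t)$ and $\delta>0$ small; this preserves $\underline h\le h\le\overline h$, the vanishing at $0$, and the $q$-growth, while forcing a positive derivative everywhere. The only delicate point of the construction is the behaviour at $t=0$: a constant-scale mollifier would push $\overline h(0)$ above $0$ and would reverse the inequality $\overline h\ge h$ in a right neighbourhood of the origin, which is exactly why the $t$-dependent scale $\epsilon(t)=t/2$ and the one-sided support of $\eta$ are both essential. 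Once those two ingredients are fixed, the main technical obstacle disappears and the remaining verifications are routine.
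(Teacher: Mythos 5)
Your construction is, at its core, the same as the paper's: the paper also takes the running maximum $l(t)=\max_{[0,t]}h$ for the upper envelope and the capped tail--minimum $\eta(s)=\min\{\min_{t\ge s}h(t),Cs^q\}$ for the lower one, and then regularizes by a one-sided average over a window of width proportional to $t$, namely $\tilde l(t)=\frac1t\int_t^{2t}l(s)\,ds$ and $\underline h(t)=\frac1t\int_{t/2}^{t}\tilde h(s)\,ds$. Your smooth one-sided bump with scale $\epsilon(t)=t/2$ is exactly this mechanism with a smooth kernel in place of the uniform one; it buys $C^\infty(0,\infty)$ instead of $C^1(0,\infty)$, at no extra cost. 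The verifications of the sandwich $\underline h\le h\le\overline h$, the vanishing at $0$, and the $q$-growth are all sound. (One cosmetic slip: the integration window $(t,3t/2)$ \emph{dilates} rather than translating rigidly; monotonicity nonetheless follows because for each fixed $s$ the map $t\mapsto t(1-s/2)$ is increasing and $\overline h_0,\underline h_0$ are nondecreasing.)

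The one genuine defect is your patch for strict monotonicity of $\underline h$: replacing $\underline h$ by $(1-\delta)\underline h$ does \emph{not} ``force a positive derivative everywhere'' --- a constant multiple is flat exactly where $\underline h$ is flat, and $\underline h_0=\min_{s\ge t}h(s)$ can be constant on intervals. Since the paper's statement asks for increasing functions and its own construction delivers strictly increasing ones (it adds $\int_0^s h(\zeta)\,d\zeta$ to the upper envelope, and builds the lower one as $\frac1M\int_0^t\eta(s)\,ds$ with $\eta>0$ on $(0,\infty)$, so the derivative is strictly positive), you need a fix that works from below. One that does: replace $\underline h(t)$ by $\frac1t\int_0^t\underline h(s)\,ds$, whose derivative equals $\frac1{t^2}\int_0^t\bigl(\underline h(t)-\underline h(s)\bigr)ds>0$ because $\underline h(0)=0<\underline h(t)$ for $t>0$; this stays below $\underline h\le h$, keeps the vanishing at $0$ and the growth $\ge\tfrac12\underline h(t/2)$, and retains the regularity. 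With that correction (your $\overline h+\delta\psi$ fix for the upper function is fine), the argument is complete.
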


\noindent \textbf{Proof.} At first, we are going to prove the
existence of $\overline{h}$. Defining $l(t)=\max_{s\in[0,t]}h(s)$,
it is to check that $l$ is continuous and
$$l(t)\geq h(t),~t\geq 0,~~l(0)=0~~\mbox{and}~~l\ \mbox{is
nondecreasing}.$$

To the regularity, we are going to define $\tilde{l}:[0,\infty)\to
[0,\infty)$ by $\tilde{l}(0)=0$ and
$$\tilde{l}(t)=\displaystyle\frac{1}{t}\int_{t}^{2t}{l}(s)ds,\ t>0. $$

So, it is immediate that
$$(\tilde{l})^{'}(t)\geq 0,~~\mbox{and}~~ h(t)\leq l(t)\leq \tilde{l}(t)\leq l(2t),\ \forall\ t\geq0$$
and defining
$$\overline{h}(s)=\tilde{l}(s)+\int_0^s h(\zeta)d\zeta,~s \geq 0,$$
we have the claimed.

Now, let us prove the existence of $\underline{h}$.  Since
$\liminf_{s\to\infty}{h(s)}/{s^q}>0,\ \mbox{for some}\ q>0$, then
there exist positive constants $M$ and $C$ such that $h(s)\geq
Cs^q,s\geq M.$ Set $\eta(s)=\min\{\min_{t \geq s}h(t), Cs^q\}$ for
$s\in [0, M]$, and define
$$ \tilde{h}(t)=\left\{
\begin{array}{cc}
 \displaystyle\frac{1}{M}\int_0^t\eta(s)ds,\ &\ t\in[0,M],\\

\displaystyle\frac{\displaystyle\int_0^M\eta(s)ds}{M^{q+1}}t^q,\ &\
t\in[M,\infty).
\end{array}
\right.
$$
Finaly, defining the $C^1(0,\infty)\cap C[0,\infty)$ function
$\underline{h}:[0,\infty) \to [0,\infty)$ by $\underline{h}(0)=0$
and
$$\underline{h}(t)=\frac{1}{t}\int_{\frac{t}{2}}^t \tilde{h}(s)ds,\ t>0,$$
we have proved the claiming.
\fim

\noindent \textbf{Proof of Theorem \ref{teo1}.} Due to the lack of ellipticity of the
operator $\Delta_p $, we cannot apply standard comparison principle. So, we are
going to consider a modified problem by $0 < \varepsilon <1$ given
by
\begin{equation}\label{epsilon}
 \left\{
\begin{array}{c}
 div((|\nabla u|^{p-2}+\epsilon)\nabla u)=a(x)f(u)+\mu b(x)|\nabla u|^\alpha\ \ \mbox{in}\ \Omega,\\
u\geq0\ \mbox{in}\ \Omega,\ \ u(x)=1\ \mbox{on}\ \partial\Omega.
\end{array}
\right.
\end{equation}

Since $0$ and $1$ are sub and super solutions of (\ref{epsilon})
respectively, it follows by a theorem in Kura [12], that
(\ref{epsilon}) admits a solution $\zeta_1^\epsilon\in
C^{1,\nu}(\overline{\Omega})$ for some $\nu \in (0,1]$, not
depending on $\epsilon$, such that $0\leq \zeta_1^\epsilon\leq 1$ in
$\overline{\Omega}$.

Now, inductively repeating this process, using
$\zeta_{k-1}^\epsilon$ as a sub solution and $k$ as a super
solution, we get a sequence $\zeta_k^\epsilon\in
C^{1,\nu}(\overline{\Omega})$ (the same $\nu$ as before) that
satisfies
\begin{equation}
\label{1.241} 0 \leq  \zeta_{1}^\epsilon \leq  \zeta_{2}^\epsilon
\leq \cdots \leq \zeta_{k-1}^\epsilon\leq\zeta_k^\epsilon\leq
k~~\mbox{in}~~\overline{\Omega}
\end{equation}
 and
\begin{equation}\label{epsilonk}
 \left\{
\begin{array}{c}
 div((|\nabla u|^{p-2}+\epsilon)\nabla u)=a(x)f(u)+\mu b(x)|\nabla u|^\alpha\ \ \mbox{in}\ \Omega,\\
u\geq0\ \mbox{in}\ \Omega,\ \ u(x)=k\ \mbox{on}\ \partial\Omega.
\end{array}
\right.
\end{equation}

As a consequence of this and Lemma \ref{lema2} with $h=f$, we have
$\zeta_k^\epsilon\in C^{1,\nu}(\overline{\Omega})$ satisfies
\begin{equation}\label{epsilon0}
 \left\{
\begin{array}{c}
 div((|\nabla u|^{p-2}+\epsilon)\nabla u)\geq a(x)\underline{f}(u)-\mu b^-(x)(|\nabla u|+1)^\alpha\ \ \mbox{in}\ \Omega,\\
u\geq0\ \mbox{in}\ \Omega,\ \ u(x)=k\ \mbox{on}\ \partial\Omega.
\end{array}
\right.
\end{equation}

\noindent Now, we are going to assume  $(a_1)$.
\medskip

\noindent\textbf{Claim:} {\em For each $x\in\Omega$, there exist a
open $V_x \subset \subset\Omega$ and a function  $\zeta_x\in
C^{2}(V_x)$ satisfying
$$0 \leq  \zeta_{1}^\epsilon \leq  \zeta_{2}^\epsilon
\leq \cdots \leq \zeta_{k-1}^\epsilon\leq\zeta_k^\epsilon\leq\cdots
\leq \zeta_x~\mbox{in}~V_{x},~\mbox{for all}~0<\varepsilon <1,~k\in
\mathbb{N}$$ and $0 \leq \mu < \Lambda^*(\Omega)$ given, where
$\Lambda^*(\Omega)> 0$ was defined in Lemma $\ref{lema31}$.}

In fact, given a $x_0\in\Omega$, we are going to consider two cases:\\
\noindent$\underline{Case~ 1:}$ $a(x_0)>0$. In this case, consider
$V_{x_0}\subset\Omega$ a smooth open domain such that $a(x) \geq
a_0>0$ for all $x\in \overline{V}_{x_0}$, $v\in
C^2(\overline{V}_{x_0})$ the solution of problem
 \begin{equation}
 \label{1.27}
 \left\{
\begin{array}{c}
-\Delta u=1\ \ \mbox{in}\ V_{x_0},\\
u>0\ \mbox{in}\ V_{x_0},\ \ u(x)=0\ \mbox{on}\ \partial V_{x_0}
\end{array}
\right.
\end{equation}
and denote by $g(x)=-\Delta_p v(x),~x \in V_{x_0}$. So, $g\in
L^\infty(V_{x_0})$.

Besides this, by Lemma \ref{lema2}, there exist  $s_0>0$ such that
$\underline{f}(s)\geq cs^q$ for $s\geq s_0$, where
$d=\liminf_{s\to\infty}{\underline{f}(s)}/{s^q}>0 \,\,\, \mbox{for
some} \,\, q>\max\{\alpha, p-1,1\}$ and $c=d/2$. Now, defining
$\omega=Mv^{-\beta}\in C^2(V_{x_0})$, where $M$ and $\beta$ are
positive real parameters, we have, for each $0\leq\varphi\in
C_0^\infty(V_{x_0})$, that
\begin{eqnarray*}
\begin{array}{l}
  \displaystyle\int_{V_{x_0}}(|\nabla\omega|^{p-2}+\epsilon)\nabla\omega\nabla\varphi dx+\int_{V_{x_0}}[ca(x)\omega^q-\mu b^-(x)
(|\nabla\omega|+1)^\alpha]\varphi dx= \\
\\
 \displaystyle -\int_{V_{x_0}}\beta^{p-1}M^{p-1}v^{(-\beta-1)(p-1)}|\nabla v|^{p-2}\nabla v\nabla\varphi dx-\int_{V_{x_0}}\epsilon\beta Mv^{-\beta-1}
\nabla v\nabla\varphi dx+ \\
\\
  \displaystyle\int_{V_{x_0}}[ca(x)M^qv^{-\beta q}-\mu b^-(x)(\beta  Mv^{-\beta-1}|\nabla v|+1)^\alpha]\varphi
  dx=\\
  \\
-\displaystyle\int_{V_{x_0}}\beta^{p-1}M^{p-1}|\nabla v|^{p-2}\nabla
v\nabla(v^{(-\beta-1)(p-1)}\varphi)dx-\int_{V_{x_0}}\beta^{p-1}M^{p-1}
(\beta+1)(p-1)v^{(-\beta-1)(p-1)-1}|\nabla v|^p\varphi dx\\
\\
-\displaystyle\int_{V_{x_0}}\epsilon\beta M\nabla v\nabla
(v^{-\beta-1}\varphi)dx-\int_{V_{x_0}}\epsilon\beta
M(\beta+1)v^{-\beta-2}|\nabla v|^2 \varphi dx+\\
\\
\displaystyle\int_{V_{x_0}}[ca(x)M^qv^{-\beta q}-\mu b^-(x)(\beta
Mv^{-\beta-1}|\nabla v|+1)^\alpha]\varphi dx.
\end{array}
\end{eqnarray*}

So, from (\ref{1.27}) and $g=-\Delta_p v$, we get
\begin{eqnarray*}
\begin{array}{l}
  \displaystyle\int_{V_{x_0}}(|\nabla\omega|^{p-2}+\epsilon)\nabla\omega\nabla\varphi dx+\int_{V_{x_0}}[ca(x)\omega^q-\mu b^-(x)
(|\nabla\omega|+1)^\alpha]\varphi dx\geq \\
\\
-\displaystyle\int_{V_{x_0}}\beta^{p-1}M^{p-1}g(x)v^{(-\beta-1)(p-1)}\varphi
dx-\int_{V_{x_0}}\beta^{p-1}M^{p-1}
(\beta+1)(p-1)v^{(-\beta-1)(p-1)-1}|\nabla v|^p\varphi dx\\
\\
-\displaystyle\int_{V_{x_0}}\epsilon\beta Mv^{-\beta-1}\varphi
dx-\int_{V_{x_0}}\epsilon\beta M(\beta+1)v^{-\beta-2}|\nabla v|^2
\varphi dx
\\
\\
+\displaystyle\int_{V_{x_0}}[ca(x)M^qv^{-\beta q}-\mu
b^-(x)2^\alpha(\beta^\alpha M^\alpha v^{\alpha(-\beta-1)}|\nabla
v|^\alpha+1)]\varphi dx.
\end{array}
\end{eqnarray*}

Now, fixing
$$\beta=\displaystyle\max\Big\{\frac{\alpha}{q-\alpha},\frac{p}{q-p+1},\frac{2}{q-1}\Big\},$$
we have
$$\min\{(-\beta-1)(p-1)-1+\beta q, (-\beta-1)\alpha+\beta q, (-\beta-2)+\beta q\}\geq0.$$
and as a consequence of this and $0\leq \epsilon <1$,  we have
\begin{eqnarray*}
\begin{array}{l}
\displaystyle\int_{V_{x_0}}(|\nabla\omega|^{p-2}+\epsilon)\nabla\omega\nabla\varphi
dx+\int_{V_{x_0}}[ca(x)\omega^q-\mu b^-(x)(|\nabla\omega|+1)^\alpha]\varphi dx\geq \\
\\
\displaystyle\int_{V_{x_0}}M^{p-1}v^{-\beta q}\Big[-\beta^{p-1}\Vert
g \Vert_{\infty}\Vert v \Vert_{\infty}^{(-\beta-1)(p-1)+\beta
q}-\beta^{p-1}
(\beta+1)(p-1)\Vert v\Vert_{\infty}^{(-\beta-1)(p-1)-1+\beta q}\Vert\nabla v\Vert_{\infty}^p \\
\\
-\displaystyle\beta M^{2-p}\Vert v\Vert_{\infty}^{-\beta-1+\beta
q}-\beta
M^{2-p}(\beta+1)\Vert v\Vert_{\infty}^{-\beta-2+\beta q}\Vert\nabla v\Vert_{\infty}^2\\
\\
-\displaystyle\mu \Vert b\Vert_{\infty}2^\alpha\Big(\beta^\alpha
M^{\alpha-p+1} \Vert v\Vert_{\infty}^{\alpha(-\beta-1)+\beta
q}\Vert\nabla v\Vert_{\infty}^\alpha+M^{1-p}\Vert
v\Vert_{\infty}^{\beta q}\Big)+cM^{q-p+1} a_0\Big]\varphi dx.
\end{array}
\end{eqnarray*}

Now, since $q>\max\{\alpha, p-1,1\}$, we can choose a constant
$M=M_{\mu,V_{x_{0}}}>0$ (not depending on $\epsilon$) large enough
such that
$$\int_{V_{x_0}}(|\nabla\omega|^{p-2}+\epsilon)\nabla\omega\nabla\varphi dx+\int_{V_{x_0}}[ca(x)\omega^q-\mu b^-(x)
(|\nabla\omega|+1)^\alpha]\varphi dx\geq0$$ and defining
$\zeta_{x_0}(x)=\omega(x)+s_0$ (not depending on $\epsilon$), we
have that $\zeta_{x_0} \in C^2(V_{x_0})$ and satisfies
\begin{equation}\label{eqlimitado}
\left\{
\begin{array}{c}
div((|\nabla u|^{p-2}+\epsilon)\nabla u)\leq
a(x)\underline{f}(u)- \mu b^-(x)(|\nabla u|+1)^\alpha\ \mbox{in}\ V_{x_0},\\
u\geq s_0\ \ \ \mbox{in}\ V_{x_0},\ u(x)\stackrel{d(x)\to
0}{\longrightarrow}\infty
\end{array}
\right.
\end{equation}
for each $0 \leq \epsilon < 1$ and $\mu \geq 0$
given.

Besides this, for $0 < \epsilon < 1$ (that is,
$\epsilon \neq 0$) given, it follows from (\ref{epsilon0}),
(\ref{eqlimitado}), and a comparison principle in \cite{GT}, that
$$0 \leq \zeta_k^\epsilon\leq\zeta_{x_0}~\mbox{in}~V_{x_0},~\mbox{for all}~k\in\mathbb{N}.$$

\noindent$\underline{Case~2:}$ $a(x_0)=0$.  Since $a$ is a
$c_\Omega-$positive function, there exists a open
$V_{x_0}\subset\Omega$ such that
$$
x_0\in V_{x_0} \,\,\, \mbox{and} \,\,\,  a(x)>0 \,\,\, \mbox{for
all} \,\,\, x\in\partial V_{x_0}.
$$
Taking a finite cover of $\partial V_{x_0}$, namely  $V_i,\
i=1,\dots,n$, such that
$$
\partial V_{x_0}\subset \bigcup_{i=1}^nV_i \,\,\, \mbox{and} \,\,\, a(x) \geq a_i>0,\ x\in V_i,
$$
it follows from the argument of the case 1 that there exists
$\zeta_{x_0}^i\in C^{2}(V_i)$ such that $0 \leq \zeta_k^\epsilon\leq
\zeta_{x_0}^i$ in $V_i$ for all $0<\epsilon<1$ and $k \in
\mathbb{N}$. In particular, there exists a positive real constant
$A=A_{x_0}>0$ such that $\zeta_k^\epsilon\leq A$ on $\partial
V_{x_0}$, $\forall\ k\in\mathbb{N}$ and $0<\varepsilon < 1$.

Now, taking $u=u_{\epsilon,\mu}\in C^1(\overline{\Omega})$ for
$0\leq \mu< \Lambda^{*}$ a solution of problem (\ref{zepsilon}),
given by Lemma \ref{lema31}, we have that $A+u_{\epsilon,\mu}$
satisfies
$$
 \left\{
\begin{array}{c}
div((|\nabla u|^{p-2}+\epsilon)\nabla u)\leq a(x)\underline{f}(u)-\mu b^-(x)(|\nabla u|+1)^\alpha\ \ \mbox{in}\ V_{x_0},\\
u\geq A\ \mbox{in}\ V_{x_0}\ \ u(x)\geq A\ \mbox{on}\ \partial
V_{x_0}
\end{array}
\right.
$$
and $\zeta^{\epsilon}_k\leq A\leq A+u_{\epsilon,\mu}$ on $\partial
V_{x_0}$. So, it follows of a comparison principle in \cite{GT} that
$\zeta^{\epsilon}_k\leq A+u_{\epsilon,\mu}$ in $V_{x_0}$. Since by
Lemma 3.1, we have $\Vert u_{\epsilon,\mu}\Vert_{\infty} \leq  C$,
with $C>0$ not depending on $\epsilon$, the claim follows by taking
$\zeta_{x_0}=A+C$.

As a consequence of the both prior cases, it follows  that given a
compact set $K\subset\Omega$ there exists a constant $C_K>0$ such
that
\begin{equation}
\label{diag}0\leq\zeta_1^\epsilon\leq\zeta_2^\epsilon\leq\cdots\leq\zeta_k^\epsilon\leq\cdots\leq
C_K~\mbox{in}~K\ \mbox{and}\ \zeta_k^\epsilon\in C^{1,\nu}
(\overline{K})\ \mbox{for all}~\epsilon \in(0,1)~\mbox{and}~k \in
\mathbb{N}.
\end{equation}

That is, taking $\epsilon_n\in (0,1)$ with
$\epsilon_n\to 0$ and $\Omega_j\subset \subset\Omega$ smooth open
sets such that
\begin{equation}
\label{C} \Omega_j\subset
\subset\Omega_{j+1}~\mbox{and}~\Omega=\cup_{j=1}^\infty\Omega_j,
\end{equation}
 it follows from
(\ref{diag}), that there exist subsequences of $(\epsilon_n)$,
denoted by $(\epsilon_{n_{ji}})$, where
$$\cdots
\subseteq N_j \subseteq N_{j-1}\subseteq \cdots \subseteq N_1
\subseteq N~\mbox{with}~ N_j=\{n_{j1},n_{j2},n_{j3}, \cdots\},$$
such that $\zeta_{k}^{\epsilon_{n_{ji}}}\stackrel{i \to
\infty}{\longrightarrow} \zeta_{k}^j~ \mbox{in}~
C^{1,\theta}(\overline{\Omega}_j)$ for some $0<\theta<\nu \leq 1$,
with $\theta$ does not depend on $\epsilon$, and
${\zeta_{k}^j}_{\mid_{\overline{\Omega}_{j-1}}}=\zeta_{k}^{j-1}$ for
each $k,j \in \mathbb{N}$.

Now, defining $\zeta_k=\zeta_{k}^j$ for $x\in\overline{\Omega}_j$,
it follows that $\zeta_{k}^{\epsilon_{n_{jj}}}\stackrel{j \to
\infty}{\longrightarrow} \zeta_k$ in
$C^{1,\vartheta}_{loc}({\Omega})$ for some $0<\vartheta<\theta<1$,
with $\vartheta$ does not depending on $\epsilon$ with $\zeta_k$
satisfying
\begin{equation}
\label{A} 0\leq\zeta_1\leq\zeta_2\leq\dots\leq\zeta_k\leq \cdots
\leq C_{\overline{\Omega}_j}~ \mbox{in}~
\overline{\Omega}_j~\mbox{for each}~ j\in\mathbb{N}\end{equation}
and
\begin{equation}
\label{B} \left\{
\begin{array}{c}
\Delta_pu=a(x)f(u)+\mu b(x)|\nabla{u}|^\alpha\ \ \ \mbox{in}\ \Omega,\\
u\geq0\ \ \ \mbox{in}\ \Omega,\ u(x)=k~\mbox{on}\ \partial\Omega
\end{array}
\right.
\end{equation}
for each $k \in \mathbb{N}$ given.

 Hence, applying the diagonal process again, now in $k$, it follows from (\ref{A}) and (\ref{B})
 that there exists a $\zeta \in C^1(\Omega)$ solution of
(\ref{3.1}).
\medskip

\noindent Now, we are going to assume  $(a_2)$.
\medskip

In what follows, we will take $\Omega_j\subset \subset\Omega$ smooth
open sets satisfying (\ref{C}) again. Then, it follows from
hypothesis $(a_\Omega)^{\prime}$ that there exists $a_{\Omega_n}>0$
such that $a(x)\geq a_{\Omega_n}$ in $\Omega_n$. This permit us, in
a similar way to Case 1, to build a function $\overline\omega_n \in
C^2(\Omega_n)$ ($\overline\omega_n$ independent of $\varepsilon$)
satisfying
 \begin{eqnarray}
\label{4.7.3} \left\{
\begin{array}{l}
div((|\nabla \omega|^{p-2}+\epsilon)\nabla \omega)\leq
a(x)\underline{f}(\omega)-\mu b^-(x)(|\nabla \omega|+1)^\alpha~
\mbox{in}\ \Omega_n,\\

\omega\geq0\ \mbox{in}\
\Omega_n,~~\omega(x)\stackrel{d(x)\to0}{\longrightarrow}\infty
\end{array}
\right.
\end{eqnarray}
for each $0 \leq \epsilon <1$ and $\mu \geq 0$ given.

Beside this, for each $0 < \epsilon <1$, we have  $0
\leq \zeta_k^\epsilon\leq\overline{\omega}_n$ in $\Omega_n$ for all
$k\in\mathbb{N}$, where $\zeta_k^\epsilon \in
C^{1,\nu}(\overline{\Omega})$ satisfies (\ref{1.241}) and
(\ref{epsilonk}). So, given  a compact set $K\subset\Omega$ there
exists a $n_K \in \mathbb{N}$ such that $K \subset \Omega_{n_K}$.
Thus, there exists a constant $C_K>0$ such that (\ref{diag}) holds
again.

That is, under the notations of last diagonal process, we obtain
$\zeta_{k}^{\epsilon_{n_{ji}}}\stackrel{i \to
\infty}{\longrightarrow} \zeta_{k}^j~ \mbox{in}~
C^{1,\theta}(\overline{\Omega}_j)$ for some $0<\theta<\nu \leq 1$,
with $\theta$ does not depend on $\epsilon$, $0 \leq\zeta_{k}^j \leq
\overline{\omega}_{j+1}$ in $\overline{\Omega}_j$ and
${\zeta_{k}^j}_{\mid_{\overline{\Omega}_{j-1}}}=\zeta_{k}^{j-1}$ for
each $k,j \in \mathbb{N}$. So, repeating the argument as before, we
get a  that is a solution of (\ref{3.1}). These end the proof of
Theorem 3.1. \fim

\section{\textbf{Proof of Theorem \ref{teo2}}}

First, we are going to consider the case ($P$)$_{\rho}$--$(ii)$,
because in the proof of ($P$)$_{\rho}$--$(i)$ we let us use the
proof of the first case with $\mu=0$.
\medskip

\noindent \textbf{Case 1}: Assume ($P$)$_{\rho}$--$(ii)$, that is, $b^+\neq 0$. \\
\medskip

At first, we are going to build a nonnegative sub solution
$\underline{u}$ of (\ref{eq1}) by proving the existence of a
solution for the problem
\begin{equation}\label{14}
\left\{
\begin{array}{c}
\Delta_pu=a(x)\overline{f}(u)+\mu b(x)|\nabla u|^{p-1}\ \mbox{in}\ \mathbb{R}^N,\\
u\geq0\ \mbox{in}\ \mathbb{R}^N,~~ u(x)\stackrel{|x|\to
+\infty}{\longrightarrow}+\infty,
\end{array}
\right.
\end{equation}
where $\overline{f}$ was built as in  Lemma \ref{lema2}.

To do this, first we note that of Theorem \ref{teo1}, we get a
$\underline{u}_n \in C^1(B_n)$  solution of  problem
$$
\left\{
\begin{array}{c}
\Delta_pu=a(x)\overline{f}(u)+\mu b(x)|\nabla u|^{p-1}\ \mbox{in}\ B_n,\\
u\geq0\ \mbox{in}\ B_n,~~ u(x)=+\infty,\ \mbox{on}\ \partial B_n
\end{array}
\right.
$$
and as a consequence of Theorem 2.1, we have
$\underline{u}_n\geq\underline{u}_{n+1}\geq0\ \mbox{in}\ B_n$. In
this case, $\mu_*=\mu_*(B_n)=\infty$, since ($a_2$) holds for each
$n \in \mathbb{N}$.

So, by a diagonal process, we can show that
$\underline{u}_n\longrightarrow\underline{u}$ in $C^1(\mathbb{R}^N)$ that satisfies
$$\int_{\mathbb{R}^N}|\nabla \underline{u}|^{p-2}\nabla \underline{u}\nabla \phi dx+\int_{\mathbb{R}^N}[a(x)\overline{f}(\underline{u})+
\mu b(x)|\nabla \underline{u}|^{p-1}]\phi dx=0, \ \phi\in
C_0^\infty(\mathbb{R}^N).$$  To complete the building of
$\underline{u}$, just remain to prove that $\underline{u}(x) \to
+\infty$ when $|x| \to +\infty$. To do this, defining $\omega^n \in
C^{1}({B}_n)$ by
\begin{equation}\label{omeganm}
\omega^n(x)=\int_{\underline{u}_n(x)}^\infty
\big(\overline{f}(t)+1\big)^{-\frac{1}{p-1}}dt,~x\in B_n,
\end{equation}
 we have $\omega^n>0$ in $B_n$,
$\omega^n(x)=0$ on $\partial B_n$ and
$$
\label{4.6}
\begin{array}{cll}\displaystyle\int_{B_n}|\nabla\omega^n|^{p-2}\nabla\omega^n\nabla\varphi dx
&=&-\displaystyle\int_{B_n}\overline{f}((\underline{u}_n)+1)^{-1}|\nabla \underline{u}_n|^{p-2}\nabla \underline{u}_n\nabla\varphi dx\\
&\leq&\displaystyle\int_{B_n}\overline{f}((\underline{u}_n)+1)^{-1}[a(x)\overline{f}(\underline{u}_n)+\mu b(x)|\nabla \underline{u}_n|^{p-1}]\varphi dx\\
&\leq& \displaystyle \int_{B_n} [a(x)+\mu
b^+(x)|\nabla\omega^n|^{p-1}]\varphi dx.
\end{array}
$$

That is,
$$\displaystyle\int_{B_n}|\nabla\omega^n|^{p-2}\nabla\omega^n\nabla\varphi dx\leq \displaystyle \int_{B_n} [a(x)(1 +(\omega+1)^\eta/2)+\mu
b^+(x)|\nabla\omega^n|^{p-1}]\varphi dx,$$ for every $\varphi \in
C^{\infty}(\mathbb{R}^N)$ with $\varphi \geq 0$.

 So, given $0 \leq \mu < \Lambda_*$, it follows from
Theorem 2.1 that $\omega^n\leq \omega_\mu$ in $B_n$ for all $n$,
where $\Lambda_*$ and $\omega_\mu$ were given in Lemma \ref{lema1}.
Since $\underline{u}_n\to \underline{u}$ in $C^1(\mathbb{R}^N)$, it
follows from (\ref{omeganm}) that there exists a $\omega_0\in
C^1(\mathbb{R}^N)$ with $\omega_0\leq\omega_\mu$ and $\omega_0(x)\to
0$ as ${|x|\to\infty}$ such that $\omega^n\to\omega_0$ in
$C^1(\mathbb{R}^N)$ and
$$\omega_0(x)=\int^\infty_{\underline{u}(x)}\big(\overline{f}(t)+1\big)^{-\frac{1}{p-1}}dt,~x \in \mathbb{R}^N.$$
 As a consequence of this, we have $\underline{u}(x)\to\infty$ as
 ${|x|\to\infty}$. This shows that $\underline{u}$ is a solution of (\ref{14}), that is, $\underline{u}$ is a sub solution
of (\ref{eq1}).

 Now, considering the problem
\begin{eqnarray}
\label{bnn}\left\{
\begin{array}{c}
 \Delta_p u=a(x)f(u)+\mu b(x)|\nabla u|^{p-1}\ \ \mbox{in}\ \ B_n,\\
u\geq 0\ \mbox{in}\ B_n,~~ u(x)=\underline{u}(x), \ \ \mbox{on}\
\partial B_n
\end{array}
\right.
\end{eqnarray}
we have that $\underline{u}$ and $\overline{\omega}_n$ are sub and
super of (\ref{bnn}) and $\underline{u}\leq\overline{\omega}_n$ em
$B_n$, where $\overline{\omega}_n$ satisfies (\ref{4.7.3}) with
$\Omega_n=B_n$ and $\epsilon=0$. Then, by sub and super solution
method and regularity theory, the problem (\ref{bnn}) has a solution
$u_n\in C^1(B_n)$ with $\underline{u}\leq
u_n\leq\overline{\omega}_n$ for all $n \in \mathbb{N}$.

So, applying the Theorem 2.1 again, we have $\underline{u}\leq
u_m\leq \overline{\omega}_n$ in $B_n$ for all $m,n \in \mathbb{N}$
such that $m\geq n$ and as a consequence of this, by a diagonal
process, there is a function
 $u\in C^1(\mathbb{R}^N)$ and a subsequence of ${u_n}$, denoted by itself, such that $u_n\to u$ with  $u\geq \underline{u}$
 in $\mathbb{R}^N$ and $u$ a solution of (\ref{eq1}).
\medskip

\noindent \textbf{Case 2}: Suppose ($P$)$_{\rho}$--$(i)$. \\

At first, given $\epsilon \in (0,1)$ and $n \in \mathbb{N}$, we are
going to consider $\zeta_1^{\epsilon,n}\in C^1(\overline{B}_n)$ and
$\overline{\omega}^n \in C^1({B_n})$ solutions of the problems
(\ref{epsilon}) and (\ref{4.7.3}), respectively in $B_n$. So,
$\zeta_1^{\epsilon,n}$ and $\overline{\omega}^n$ are sub and super
solutions of the problem
\begin{equation}\label{bmenos}
 \left\{
\begin{array}{c}
div((|\nabla u|^{p-2}+\epsilon)\nabla u)=a(x)\underline{f}(u)-\mu b^-(x)(|\nabla u|+1)^\alpha\ \  \mbox{in}\ B_n,\\
u\geq0\ \mbox{in}\ B_n,\ u(x)=1\ \mbox{on}\ \partial B_n
\end{array}
\right.
\end{equation}
and, by standard principle comparison, we have
$\zeta_1^{\epsilon,n}\leq\overline{\omega}^n$ in $ B_n$. We remember
that $\overline{\omega}^n$ does not depend of $\epsilon \in (0,1)$.

{Now, taking $B_{n-{1}/{k}}\subset B_n$, where $k\in\mathbb{N}$,} it
follows by a sub and super solution of \cite{K} and a result of
regularities in \cite{G} that the problem
$$
 \left\{
\begin{array}{c}
div((|\nabla u|^{p-2}+\epsilon)\nabla u)=a(x)\underline{f}(u)-\mu b^-(x)(|\nabla u|+1)^\alpha\ \  \mbox{in}\ B_{n-{1}/{k}},\\
u\geq0\ \mbox{in}\ B_{n-{1}/{k}},\
u(x)=\zeta_1^{\epsilon,n}|_{B_{n-{1}/{k}}}\ \mbox{on}\ \partial
B_{n-{1}/{k}}
\end{array}
\right.
$$
admits a solution $u_{1,k}^{\epsilon,n}\in
C^{1,\nu}(\overline{B_{n-{1}/{k}}})$, for some $0<\nu \leq 1$. After
this, {applying a diagonal process in $k$}, we show that that  the
problem (\ref{bmenos}) admits a solution $u_1^{\epsilon,n}\in
C^{1,\theta}(\overline{B}_n)$, for some $0<\theta<\nu$, such that
$\zeta_1^{\epsilon,n}\leq u_1^{\epsilon,n} \leq \overline{\omega}^n$
in $ B_n$.

Repeating this process, by using $u_{k-1}^{\epsilon,n}$ as a sub
solution and $\overline{\omega}^n$ as a super solution, we obtain a
sequence $\{u_k^{\epsilon,n}\}_{k=1}^{\infty}\in
C^{1,\theta}(\overline{B}_n)$ satisfying
\begin{equation}
\label{137} 0\leq u_1^{\epsilon,n}\leq u_2^{\epsilon,n}\leq\dots\leq
u_{k-1}^{\epsilon,n}\leq u_k^{\epsilon,n}\leq \dots
\leq\overline{\omega}^n~\mbox{in}~\overline{B}_n
\end{equation}
 and
$$
\left\{
\begin{array}{c}
div((|\nabla u_k^{\epsilon,n}|^{p-2}+\epsilon)\nabla u_k^{\epsilon,n})=a(x)\underline{f}(u_k^{\epsilon,n})-\mu b^-(x)
(|\nabla u_k^{\epsilon,n}|+1)^\alpha\ \  \mbox{in}\ B_n,\\
u_k^{\epsilon,n}\geq0\ \mbox{in}\ B_n,\ u_k^{\epsilon,n}(x)=k\
\mbox{on}\ \partial B_n.
\end{array}
\right.
$$

Now, by a diagonal process, it follows from (\ref{137}) that there
exists a function $u^{\epsilon,n}\in C^{1,\vartheta}(B_n)$, for some
$0<\vartheta<\theta$, such that
$$
\left\{
\begin{array}{c}
div((|\nabla u^{\epsilon,n}|^{p-2}+\epsilon)\nabla u^{\epsilon,n})=a(x)\underline{f}(u^{\epsilon,n})-\mu b^-(x)(|\nabla u^{\epsilon,n}|
+1)^\alpha\ \  \mbox{in}\ B_n,\\
u^{\epsilon,n}\geq0\ \mbox{in}\ B_n,\ u^{\epsilon,n}(x)=\infty\
\mbox{on}\
\partial B_n
\end{array}
\right.
$$
and, by comparison principle in \cite{GT},
$$
\label{138} 0 \leq u^{\epsilon,n+1}\leq
u^{\epsilon,n}\leq\overline{\omega}^n\ \mbox{in}\ B_n.
$$

So, following the same argument as in the proof of Case 2 of Theorem
3.1, we show that there exists a $u^n\in C^1(B_n)$ solution of
 the problem
$$
 \left\{
\begin{array}{c}
\Delta_pu=a(x)\underline{f}(u)-\mu b^-(x)(|\nabla u|+1)^\alpha\ \  \mbox{in}\ B_n,\\
u\geq0\ \mbox{in}\ B_n,\ u(x)\stackrel{x\to\partial
B_n}{\longrightarrow}\infty
\end{array}
\right.
$$
satisfying
$$
\label{1371} 0\leq \cdots \leq  u^{n+1}\leq u^{n}\leq
\overline{\omega}^n~\mbox{in}~\overline{B}_n.
$$

On the other hand, it follows from the case 1, with $\mu=0$, (In
this case, in the proof of Lemma 2.2, it is necessary just that the
solution of (P$_{\rho}$) belongs to $C^1(\mathbb{R}^N)$) that there
exists a $v\in C^1(\mathbb{R^N})$ satisfying
$$
 \left\{
\begin{array}{c}
\Delta_pv=a(x)f(v)\geq a(x)\underline{f}(v)\ \  \mbox{in}\ \mathbb{R}^N,\\
v\geq0\ \mbox{in}\ \mathbb{R}^N,\
v(x)\stackrel{|x|\to\infty}{\longrightarrow}\infty.
\end{array}
\right.
$$
 Beside this,  by
comparison principle \cite{PT}, we have $v\leq u^n$ in $B_n$ for all
$n\in~\mathbb{N}$.

So, by a diagonal process, there exists a $\overline{u}\in
C^1(\mathbb{R}^N)$ such that $v\leq \overline{u}$ in $\mathbb{R}^N$,
$u^n\to \overline{u}$ in $C^1(\mathbb{R}^N)$ and $\overline{u}$ is a
solution of the problem
$$
 \left\{
\begin{array}{c}
\Delta_pu=a(x)\underline{f}(u)-\mu b^-(x)(|\nabla u|+1)^\alpha\ \  \mbox{in}\ \mathbb{R}^N,\\
u\geq0\ \mbox{in}\ \mathbb{R}^N,\
u(x)\stackrel{|x|\to\infty}{\longrightarrow}\infty.
\end{array}
\right.
$$

Thus, since $v$ and $\overline{u}$ are sub and super solutions of
the problem
\begin{equation}\label{bmenosprincipal}
 \left\{
\begin{array}{c}
\Delta_pu=a(x)f(u)-\mu b^-(x)|\nabla u|^\alpha\ \  \mbox{in}\ \mathbb{R}^N,\\
u\geq0\ \mbox{in}\ \mathbb{R}^N,\
u(x)\stackrel{|x|\to\infty}{\longrightarrow}\infty
\end{array}
\right.
\end{equation}
it follows by a theorem of sub e super solution in \cite{K}, that
there exists a solution $u\in C^1(\mathbb{R}^N)$ for the  problem
(\ref{bmenosprincipal}). This finishes the proof. \fim

As an immediate consequence of the arguments used in the proof of
last theorem, we have

\begin{corollary} \label{T3}
Assume that $\Omega\subset \mathbb{R}^N$ is smooth bounded domain,
$\liminf_{t\to\infty}{f(t)}/{t^q}>0$ for some $q>\max\{\alpha, p-1,
1\}$ and $a, b\in L^\infty_{loc}(\Omega)$ with $a$ satisfying
$(a_\Omega)^{\prime}$ and
\begin{equation}\label{extra}
 \left\{
\begin{array}{c}
-\Delta_pw=\rho(x)\ \  \mbox{in}\ \Omega,\\
w>0\ \mbox{on}\ \Omega,\ w(x)=0\ \partial\Omega,
\end{array}
\right.
\end{equation}
has a solution in $C^1(\overline{\Omega})$, where
$\rho(x)=\max\{a(x), b^+(x)\}$, $x \in \Omega$ with either
$$(i)~~b^+=0~\mbox{and}~0 \leq \alpha \leq p~~~~\mbox{or}~~~~(ii)~~b^+ \neq 0~\mbox{and}~\alpha = p-1.$$ Then
there exists $\mu^{\star} \in (0, +\infty]$ such that the problem
\begin{equation}
\label{1.36} \left\{
\begin{array}{c}
\Delta_pu=a(x)f(u)+\mu b(x)|\nabla{u}|^\alpha\  \mbox{in}~\Omega,\\
u\geq0\ \mbox{on}\ \Omega,\ u(x)\stackrel{d(x)\to
0}{\longrightarrow}\infty
\end{array}
\right.
\end{equation}
 has a solution in $C^1(\Omega)$, for each $0 \leq \mu <
\mu^{\star}$ given. In additional, if $(i)$ holds, then
$\mu^{\star}=+\infty$.
\end{corollary}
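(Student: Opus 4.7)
The plan is to mirror the proof of Theorem \ref{teo2}, replacing $\mathbb{R}^N$ by $\Omega$ and the exhaustion by balls $B_n$ with a smooth exhaustion $\Omega_n \subset\subset \Omega_{n+1}$ satisfying $\Omega = \bigcup_n \Omega_n$, and using the $C^1(\overline{\Omega})$ solution $z$ of (\ref{extra}) in place of the function supplied by $(P)_\rho$. Since $z \in C^1(\overline{\Omega})$ automatically lies in $W^{1,\infty}(\Omega)$, the construction of Lemma \ref{lema1} goes through verbatim in case $(ii)$ and yields, for each $\eta < 0$ and $0 \le \mu < \Lambda_*$, a function $\omega_\mu \in C^1(\Omega)$ with $\omega_\mu(x) \to 0$ as $d(x) \to 0$ satisfying the differential inequality of Lemma \ref{lema1} on $\Omega$. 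In case $(i)$ one has $b^+ \equiv 0$ and can take $\Lambda_* = \infty$, since no gradient term appears in the auxiliary construction.

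For case $(i)$, I would follow Case 2 of the proof of Theorem \ref{teo2}. For each $\epsilon \in (0,1)$ and each $n$, Theorem \ref{teo1} applied to $\Omega_n$ produces $\zeta_1^{\epsilon,n} \in C^1(\overline{\Omega}_n)$; the construction of (\ref{eqlimitado})--(\ref{4.7.3}) on $\Omega_n$ supplies a super solution $\overline{\omega}^n$ blowing up on $\partial \Omega_n$, independent of $\epsilon$. A monotone iteration sandwiched between $\zeta_1^{\epsilon,n}$ and $\overline{\omega}^n$ produces $u_k^{\epsilon,n}$ with boundary data $k$; diagonalization in $k$ and then in $\epsilon$ gives $u^n \in C^1(\Omega_n)$ blowing up on $\partial \Omega_n$. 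The comparison principle of \cite{GT} forces $u^{n+1} \le u^n$ in $\Omega_n$, and a lower barrier $v \in C^1(\Omega)$ is obtained by running the very same argument with $\mu = 0$ (for which only the $C^1$ hypothesis on $z$ is required). A diagonalization in $n$ yields a super solution $\overline{u} \in C^1(\Omega)$, and the desired solution of (\ref{1.36}) follows from the sub-super solution theorem in \cite{K} together with the regularity in \cite{G}.

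For case $(ii)$, I would mirror Case 1 of Theorem \ref{teo2}. First, I build a sub solution $\underline{u} \in C^1(\Omega)$ of (\ref{1.36}) with $\overline{f}$ in place of $f$, by applying Theorem \ref{teo1} on each $\Omega_n$ with $+\infty$ boundary data and diagonalizing. The delicate point is the boundary blow-up of $\underline{u}$: set $\omega^n(x) = \int_{\underline{u}_n(x)}^\infty (\overline{f}(t)+1)^{-1/(p-1)}\, dt$ on $\Omega_n$, verify as around (\ref{omeganm}) that $\omega^n$ satisfies the inequality for which $\omega_\mu$ is a super solution, and invoke Theorem \ref{prop2} on $\Omega_n$ to conclude $\omega^n \le \omega_\mu$. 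Since $\omega_\mu \to 0$ on $\partial \Omega$, so does the diagonal limit of $\omega^n$, whence $\underline{u}(x) \to \infty$ as $d(x) \to 0$. With $\underline{u}$ in hand, I solve intermediate problems on each $\Omega_n$ with Dirichlet data $\underline{u}|_{\partial \Omega_n}$, sandwiched between $\underline{u}$ and $\overline{\omega}^n$, and diagonalize to obtain a solution of (\ref{1.36}) on $\Omega$.

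The main obstacle is checking that Lemma \ref{lema1} and Theorem \ref{prop2} transfer cleanly to the bounded domain setting with boundary behavior $\omega_\mu \to 0$ as $d(x) \to 0$ replacing decay at infinity. This reduces to verifying that the compatibility condition $\lim_{x \to y}(\omega^n - \omega_\mu)(x) \in [-\infty, 0]$ for $y \in \partial \Omega_n$ required by Theorem \ref{prop2} is indeed provided by the construction, which it is, because $\underline{u}_n \equiv +\infty$ on $\partial \Omega_n$ forces $\omega^n = 0$ there. All other steps are direct transcriptions of the corresponding arguments in Theorem \ref{teo2}, with ``$|x| \to \infty$'' replaced by ``$d(x) \to 0$'' throughout.
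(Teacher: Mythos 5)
Your proposal is correct and follows essentially the same route as the paper: the paper gives no separate proof, stating only that the corollary is ``an immediate consequence of the arguments used in the proof of the last theorem,'' and what you have written is precisely that transcription --- replacing $\mathbb{R}^N$ by $\Omega$, the balls $B_n$ by an exhaustion $\Omega_n\subset\subset\Omega_{n+1}$, and decay at infinity by vanishing as $d(x)\to 0$, with Lemma \ref{lema1} and Theorem \ref{prop2} transferring because $z\in C^1(\overline{\Omega})\subset W^{1,\infty}(\Omega)$ and the comparison principle is already stated on bounded domains. Your observation that $\omega^n=0$ on $\partial\Omega_n$ supplies the boundary compatibility needed to invoke Theorem \ref{prop2} is exactly the point that makes the blow-up of the sub solution go through.
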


This Corollary complements some above quoted results
principally by permitting the oscillatory and explosive behavior of
potentials $a$ and $b$ on boundary of $\Omega$. In particular, it
complements a result by  Liu e Yang \cite{LY} that considered in
(\ref{1.36}) the nonlinearity $f$ as a non-decreasing function
satisfying $f(s)\leq C_1s^{p_1(p-1)}$ for $s\in (0,\infty)$,
$f(s)\geq C_2s^{p_2(p-1)}$ for $s>>0$, where $p_1\geq p_2$,
$b(x)=\pm 1$ and $a$ satisfying $C_3(d(x))^{\gamma_2}\leq a(x)\leq
C_4 (d(x))^{\gamma_1}$ for all $x\in\Omega$ with $-p<\gamma_1\leq
\gamma_2$ and $C_i$ positive constants.

As examples of non-null and non-negative potentials $\rho$ satisfying  $(\ref{extra})$, we have:
\begin{enumerate}
\item [$(i)$] { $a, b^+ \in L^q(\Omega)$ for some $q>N>1$. For details, see \cite{P},}
\end{enumerate}
\begin{enumerate}
\item [$(ii)$] { $a,b \in C(\Omega)$ such that $a(x),b^{+}(x) \leq C_0d(x)^{-\gamma(x)},\ x\in\Omega,$
 where  $\gamma\in C(\overline{\Omega})$   and $\gamma(x)<{1}/{N}$ for $x\in\partial\Omega$, for some positive constant $C_0$.
 This situation permits singular behaviors for the potential $a$ in the sense that $a(x)\stackrel{x\to
x_0}{\longrightarrow}\infty$ and $a(x)\stackrel{x\to
x_1}{\longrightarrow}a_o<\infty$ for $x_0 \neq x_1$. The same can
occur for $b$ too. For more details, see \cite{J}.}
\end{enumerate}

\section{Proof of Theorem \ref{teo4}}

The proof of Theorem \ref{teo4} consists principally of delicate and
sensible estimates involving the operator and the nonlinearities. In
this result, we are mainly interested in showing nonexistence of
entire solutions that blow-up at infinity. In the literature there
are some results that prove nonexistence of either subsolutions,
supersolutions or solutions without requiring their behavior at
infinity and demanding strongest conditions under the
nonlinearities.
\smallskip

\noindent\proof. {Given $R>0$ define $\xi_R\in C^1(\mathbb{R}^N,
\mathbb{R})$ such that $\xi_R(x)= 1,~ 0\leq|x|\leq R$ and $\xi_R(x)=
0,~ |x|\geq 2R$  satisfying
$$0\leq\xi_R(x)\leq 1,\ \ |\nabla\xi_R(x)|\leq \frac{1}{R}, ~x \in \mathbb{R}^N.$$

Now, considering the $C^1$-functions $\chi=\xi_R^\mu$ and $u^{\beta}
\chi$, where $\mu,\beta>1$ are real parameters, and using the last
one as a test function in (\ref{non}), we get
$$\int_{\mathbb{R}^N}a(x)f(u)u^\beta\chi dx+\int_{\mathbb{R}^N}b(x)|\nabla u|^\alpha u^\beta\chi dx
+\int_{\mathbb{R}^N}\beta u^{\beta-1}|\nabla u|^p\chi
dx\leq\int_{\mathbb{R}^N}|\nabla u|^{p-1}u^\beta|\nabla\chi| dx.$$

By the hypothesis under $f$, there exists a $R_0>0$ (we can consider
this $R_0>0$ such that $a,b>0$ on $\mathbb{R}^N \setminus
  B_{R_0}$) such that $f(u(x)) \geq C u^q(x)$ and
$u(x) \geq 1$ for all $\vert x \vert \geq R_0$ for some $C>0$, since
$u(x) \to \infty$ as $\vert x \vert \to \infty$. That is,
$$
\begin{array}{cll}
\displaystyle C\int_{R_0\leq|x|\leq 2R}a(x)u^{\beta+q}\chi
dx&+&\displaystyle\int_{R_0\leq|x|\leq 2R}b(x)|\nabla u|^\alpha
u^{\beta}\chi dx +\displaystyle\beta\int_{R_0\leq|x|\leq 2R}
u^{\beta-1}|\nabla u|^p\chi
dx\leq  \\
\\
&&\displaystyle\int_{R_0\leq|x|\leq 2R}|\nabla
u|^{p-1}u^\beta|\nabla\chi| dx,~R>R_0,
\end{array}
$$
for some  $C>0$. Now, we can rewrite the above inequality as
\begin{equation}
\label{n1}
\begin{array}{cll}
 \displaystyle\int_{R_0\leq|x|\leq 2R}a(x)u^{\beta+q}\chi dx&+&\displaystyle\int_{R_0\leq|x|\leq 2R}b(x)|\nabla u|^\alpha u^{\beta}\chi dx
+\displaystyle\int_{R_0\leq|x|\leq 2R} u^{\beta-1}|\nabla u|^p\chi dx\leq \\
\\
&&\displaystyle\tilde{C}\int_{R_0\leq|x|\leq 2R}|\nabla
u|^{p-1}u^\beta|\nabla\chi| dx,
\end{array}
\end{equation}
where $\tilde{C}>0$ is a real constant depending of $C$ and $\beta$.

From now on, we going to consider two cases:
\smallskip

\noindent Case 1: $(i)$ holds. First, we note that $q>p-1$. So,
given $\tau\in (1+{1}/{q},1+{1}/{(p-1)})$ and considering $\tau'>1$
satisfying ${1}/{\tau'}+{1}/{\tau}=1$, we can use the inequality of
Young, to obtain
\begin{equation}
\label{n2}
\begin{array}{cll}
 &&\displaystyle\int_{R_0\leq|x|\leq 2R} a(x)^{\frac{1}{\tau'}}u^{(\beta+\frac{q}{\tau'}-\frac{1}{\tau})}|\nabla u|^{\frac{p}{\tau}}\chi dx
= \displaystyle\int_{R_0\leq|x|\leq
2R}\Big(a(x)u^{{\beta+q}}\chi\Big)^{\frac{1}{\tau'}}\Big(u^{{\beta-1}}
|\nabla u|^{{p}}\chi\Big)^{\frac{1}{\tau}}dx\\
\\
&\leq& \displaystyle\frac{1}{\tau'}\int_{R_0\leq|x|\leq
2R}a(x)u^{\beta+q}\chi dx+\frac{1}{\tau}\int_{R_0\leq|x|\leq
2R}u^{\beta-1}
|\nabla u|^p\chi dx\\
\\
&\leq& \displaystyle\int_{R_0\leq|x|\leq 2R}a(x)u^{\beta+q}\chi
dx+\int_{R_0\leq|x|\leq 2R}u^{\beta-1}|\nabla u|^p\chi dx.
\end{array}
\end{equation}

So, from (\ref{n1}) and (\ref{n2}), we get
$$
\int_{R_0\leq|x|\leq
2R}a(x)^{\frac{1}{\tau'}}u^{(\beta+\frac{q}{\tau'}-\frac{1}{\tau})}|\nabla
u|^{\frac{p}{\tau}}\chi dx+ \int_{R_0\leq|x|\leq 2R}b(x)|\nabla
u|^\alpha u^{\beta}\chi dx \leq\tilde{C} \int_{R_0\leq|x|\leq
2R}|\nabla u|^{p-1}u^\beta|\nabla\chi|dx.
$$
and so, we have
\begin{enumerate}
  \item [($I$)] $\displaystyle\int_{R_0\leq|x|\leq
2R}a(x)^{\frac{1}{\tau'}}u^{(\beta+\frac{q}{\tau'}-\frac{1}{\tau})}|\nabla
u|^{\frac{p}{\tau}}\chi dx \leq\tilde{C} \int_{R_0\leq|x|\leq
2R}|\nabla
u|^{p-1}u^\beta|\nabla\chi|dx $ \\
\\
\noindent and
  \item [($II$)] $\displaystyle\int_{R_0\leq|x|\leq 2R}b(x)|\nabla u|^\alpha u^{\beta}\chi dx\leq\tilde{C}
\int_{R_0\leq|x|\leq 2R}|\nabla u|^{p-1}u^\beta|\nabla\chi|dx. $
\end{enumerate}
\smallskip

\noindent Assume $(I)$ holds.  So, letting
$$m=\frac{\beta\tau\tau'+q\tau-\tau'}{\beta\tau\tau'}>1~\mbox{and}~n=\frac{\beta\tau\tau'+q\tau-\tau'} {q\tau-\tau'}>1,$$
we have $1/m + 1/n = 1$ and by H\"older inequality, it follows that
\begin{equation}
\label{n0}
\begin{array}{l}
\displaystyle\int_{R_0\leq|x|\leq 2R}|\nabla
u|^{p-1}u^\beta|\nabla\chi|dx= \int_{R_0\leq|x|\leq
2R}a(x)^{\frac{1}{\tau'm}}u^\beta|\nabla
u|^{p-1}\chi^{\frac{1}{m}}a(x)^{-\frac{1}{\tau'm}}\chi^{-\frac{1}{m}}|
\nabla\chi|dx \leq\\
\displaystyle C\left(\int_{R_0\leq|x|\leq
2R}a(x)^{\frac{1}{\tau'}}u^{(\beta+\frac{q}{\tau'}-\frac{1}{\tau})}|\nabla
u|^{(p-1)m}\chi dx\right)^ {\frac{1}{m}}\!\!\left(\int_{R\leq|x|\leq
2R}a(x)^{-\frac{n}{\tau'm}}\chi^{-\frac{n}{m}}|\nabla\chi|^{n}dx\right)^{\frac{1}{n}}.
\end{array}
\end{equation}

Now, choosing
$$\beta=\beta(\tau)=\frac{(p-1)(q\tau-\tau')}{\tau'(p-\tau p+\tau)}, ~\tau>1$$
and noticing that $(p-1)m={p}/{\tau}$, we get
$$
\begin{array}{l}
\displaystyle\int_{R_0\leq|x|\leq
2R}a(x)^{\frac{1}{\tau'}}u^{(\beta+\frac{q}{\tau'}-\frac{1}{\tau})}|\nabla
u|^{\frac{p}{\tau}}\chi dx \leq\\
\\
\displaystyle C\left(\int_{R_0\leq|x|\leq
2R}a(x)^{\frac{1}{\tau'}}u^{(\beta+\frac{q}{\tau'}-\frac{1}{\tau})}|\nabla
u|^{\frac{p}{\tau}}\chi dx\right)^
{\frac{1}{m}}\left(\int_{R\leq|x|\leq 2R}a(x)^{-\frac{\beta \tau}{q
\tau-\tau'}}\chi^{-\frac{\beta\tau\tau'}{q\tau-\tau'}}|\nabla\chi|^{n}
dx\right)^{\frac{1}{n}}.
\end{array}
$$

That is
\begin{equation}
\label{n3}
 \int_{R_0\leq|x|\leq 2R}a(x)^{\frac{1}{\tau'}}u^{(\beta+\frac{q}{\tau'}-\frac{1}{\tau})}|\nabla u|^{\frac{p}{\tau}}\chi dx\leq
C\int_{R\leq|x|\leq
2R}a(x)^{-\frac{\beta\tau}{q\tau-\tau'}}\chi^{-\frac{\beta
\tau\tau'}{q\tau-\tau'}}|\nabla\chi|^{n}dx.
\end{equation}

Besides this, we noting that
$$\chi^{-\frac{\beta\tau\tau'}{q\tau-\tau'}}=\xi^{-\frac{\beta\tau\tau'}{q\tau-\tau'}\mu}~\mbox{and}~|\nabla\chi|=\mu\xi^{\mu-1}|\nabla\xi|\leq
\mu\xi^{\mu-1}R^{-1},~x\in \mathbb{R}^N,
$$
we get for $\mu$ large enough, that

$$\chi^{-\frac{\beta\tau\tau'}{q\tau-\tau'}}|\nabla\chi|^n\leq\mu^n\xi^{(\mu-1)n-\frac{\beta\tau\tau'}{q\tau-\tau'}\mu}R^{-n}\leq
C_{\mu}R^{-n},\ \mbox{for all}\ x \in \mathbb{R}^N,$$ for some
$C_{\mu}>0$.

Now, fixing a such $\mu>1$, we have that
\begin{equation}
\label{n4}\int_{R\leq|x|\leq
2R}a(x)^{-\frac{\beta\tau}{q\tau-\tau'}}\chi^{-\frac{\beta
\tau\tau'}{q\tau-\tau'}}|\nabla\chi|^{n}dx\leq
C_{\mu}R^{-\frac{p}{p-\tau p+\tau}} \int_{R\leq|x|\leq
2R}a(x)^{-\frac{(\tau-1)(p-1)}{p-\tau p+\tau}}dx.
\end{equation}

Now, given $\theta\in(p-1,q)$ we can take a $\tau = \tau_{\theta}\in
(1+{1}/{q},1+{1}/{(p-1)})$ such that $\theta=q(p-1)(\tau-1)$. So,
from (\ref{n4}), we have
\begin{equation}
\label{n51}\int_{R\leq|x|\leq
2R}a(x)^{-\frac{\beta\tau}{q\tau-\tau'}}\chi^{-\frac{\beta
\tau\tau'}{q\tau-\tau'}}|\nabla\chi|^{n}dx \leq
R^{\frac{pq}{\theta-q}}\int_{R\leq|x|\leq
2R}a(x)^{\frac{\theta}{\theta-q}}dx,~R>R_0.
\end{equation}

Hence, it follows from (\ref{n3}), (\ref{n51}) and the hypothesis
($i_1$), that
\begin{equation}
\label{n52}\int_{|x|\geq
R_0}a(x)^{\frac{1}{\tau'}}u^{(\beta+\frac{q}{\tau'}-\frac{1}{\tau})}|\nabla
u|^{\frac{p}{\tau}}dx<\infty.
\end{equation}

Now returning in (\ref{n0}), rewriting its last integrals with the
domain ${R_0\leq|x|\leq 2R}$ instead of ${R\leq|x|\leq 2R}$ and
using (\ref{n51}), we get
$$
\begin{array}{l}
\displaystyle\int_{R_0\leq|x|\leq
2R}a(x)^{\frac{1}{\tau'}}u^{(\beta+\frac{q}{\tau'}-\frac{1}{\tau})}|\nabla
u|^{\frac{p}{\tau}}\chi dx \leq\\
\\
\displaystyle C\left(\int_{R\leq|x|\leq
2R}a(x)^{\frac{1}{\tau'}}u^{(\beta+\frac{q}{\tau'}-\frac{1}{\tau})}|\nabla
u|^{\frac{p}{\tau}}\chi dx\right)^
{\frac{1}{m}}\left(R^{\frac{pq}{\theta-q}}\int_{R\leq|x|\leq
2R}a(x)^{\frac{\theta}{\theta-q}}dx\right)^{\frac{1}{n}}.
\end{array}
$$

Now, it follows from (\ref{n52}) and the hypothesis, that
$$\displaystyle\int_{|x|\geq R_0}a(x)^{\frac{1}{\tau'}}u^{(\beta+\frac{q}{\tau'}-\frac{1}{\tau})}|\nabla
u|^{\frac{p}{\tau}} dx=0,$$ that is, $u(x)=c$, for all $x \in
\mathbb{R}^N \setminus B_{R_0}$, for some real constant $c>0$. This
is impossible, because $u(x) \to \infty$ as $\vert x \vert \to
\infty$.
\smallskip

\noindent Assume $(II)$ holds. First, we note that we can take
$\beta=0$ (return at the beginning of the proof and taking just
$\chi$ as a test function in place of $u^{\beta} \chi$). Now,
letting $m={\alpha}/({p-1})>1$ and $n={\alpha}/({\alpha-p+1})>1$, we
have $1/m +1/n=1$ and
$$
\begin{array}{lll}
\displaystyle\int_{R_0\leq|x|\leq 2R}b(x)|\nabla u|^\alpha\chi
dx&\leq& \displaystyle\tilde{C}\int_{R_0\leq|x|\leq
2R}[b(x)\chi]^{\frac{1}{m}}[b(x)\chi]^{-\frac{1}{m}}
|\nabla u|^{p-1}|\nabla\chi| dx\\
\\
&\leq&\displaystyle C\Big(\int_{R_0\leq|x|\leq 2R}b(x)\chi|\nabla
u|^\alpha dx\Big)^{\frac{1}{m}}\Big(\int_{R\leq|x|\leq
2R}[b(x)\chi]^{-\frac{n}{m}} |\nabla\chi|^ndx\Big)^{\frac{1}{n}}.
\end{array}
$$
That is,
\begin{equation}
\label{n5} \int_{R_0\leq|x|\leq 2R}b(x)|\nabla u|^\alpha\chi dx\leq
C\int_{R\leq|x|\leq 2R}[b(x)\chi]^{-\frac{n}{m}} |\nabla\chi|^ndx.
\end{equation}

Since,
$$|\nabla\chi|^n\chi^{-\frac{n}{m}}=\mu^n\xi^{\mu-n}|\nabla\xi|^n\leq\mu^nR^{-n}\
\mbox{for all}\ \mu\geq n
$$
it follows from (\ref{n5}), that
$$
\int_{R_0\leq|x|\leq 2R}b(x)|\nabla u|^\alpha \chi dx\leq
CR^{-\frac{\alpha}{\alpha-p+1}}\int_{R\leq|x|\leq
2R}b(x)^{-\frac{p-1}{\alpha-p+1}}dx.
$$

Thus, it follows from the hypothesis,  that
$$\displaystyle  \int_{|x|\geq R_0}b(x)|\nabla u|^\alpha  dx<\infty$$
and the rest of the proof follows like the last case.
\smallskip

\noindent Case 2: $(ii)$ holds. Given $\tau\in(1,{\alpha}/({p-1}))$,
take $\tau'>1$ such that ${1}/{\tau}+{1}/{\tau'}=1$. So, we have
$$
\begin{array}{lll}
&&\displaystyle \int_{R_0\leq|x|\leq 2R}\Big(a(x) u^{{\beta+q}} \chi
\Big)^{\frac{1}{\tau'}}\Big(b(x) u^{{\beta}}|\nabla u|^{{\alpha}}
\chi\Big)^{\frac{1}{\tau}}dx\\
\\
&\leq&\displaystyle\frac{1}{\tau'}\int_{R_0\leq|x|\leq
2R}a(x)u^{\beta+q}\chi dx+\frac{1}{\tau}\int_{R_0\leq|x|\leq
2R}b(x)u^\beta
|\nabla u|^\alpha\chi dx\\
\\
&\displaystyle\leq&\displaystyle\int_{R_0\leq|x|\leq
2R}a(x)u^{\beta+q}\chi dx+\int_{R_0\leq|x|\leq 2R}b(x)u^\beta|\nabla
u|^\alpha\chi dx.
\end{array}
$$

Hence, it follows from (\ref{n1}) that
 \begin{equation} \label{n6}
\int_{R_0\leq|x|\leq
2R}a(x)^{\frac{1}{\tau'}}b(x)^{\frac{1}{\tau}}u^{(\beta+\frac{q}{\tau'})}\chi|\nabla
u|^{\frac{\alpha}{\tau}} dx\leq \tilde{C}\int_{R_0\leq|x|\leq
2R}|\nabla u|^{p-1}u^\beta|\nabla\chi|dx.
\end{equation}

Now, defining
$$\beta=\beta(\tau)=\frac{q(p-1)}{(\tau'-1)(\alpha-\tau p+\tau)},~~ m=\frac{\beta\tau'+q}{\beta\tau'}~~\mbox{and}~~n=\frac{\beta\tau'+q}{q},$$
we have $1/m + 1/n =1$ and
$$
\begin{array}{lll}
&&\displaystyle\int_{R_0\leq|x|\leq 2R}|\nabla
u|^{p-1}u^\beta|\nabla\chi|dx=\\
\\
 &&\displaystyle\int_{R_0\leq|x|\leq
2R}a(x)^{\frac{\beta}{\beta\tau'+q}}b(x)^{\frac{\beta\tau'}{(\beta\tau'+q)\tau}}u^\beta
\chi^{\frac{\beta\tau'}{\beta\tau'+q}}|\nabla
u|^{p-1}a(x)^{-\frac{\beta}{\beta\tau'+q}}b(x)^{-\frac{\beta\tau'}{(\beta\tau'
+q)\tau}}\chi^{-\frac{\beta\tau'}{\beta\tau'+q}}|\nabla\chi|dx\leq\\
\\
&&\displaystyle C\Big(\int_{R_0\leq|x|\leq
2R}a(x)^{\frac{1}{\tau'}}b(x)^{\frac{1}{\tau}}u^{(\beta+\frac{q}{\tau'})}\chi|\nabla
u|^{(p-1)m}dx\Big)^{\frac{1}{m}} \Big(\int_{R\leq|x|\leq
2R}a(x)^{-\frac{\beta}{q}}b(x)^{-\frac{\beta\tau'}{q\tau}}\chi^{-\frac{\beta\tau'}{q}}|\nabla
\chi|^{n} dx\Big)^{\frac{1}{n}}.
\end{array}$$

Since that $(p-1)m={\alpha}/{\tau}$, it follows that
$$
\begin{array}{lll}
&&\displaystyle\int_{R_0\leq|x|\leq 2R}|\nabla
u|^{p-1}u^\beta|\nabla\chi|dx\leq\\
\\
&&\displaystyle C\Big(\int_{R_0\leq|x|\leq
2R}a(x)^{\frac{1}{\tau'}}b(x)^{\frac{1}{\tau}}u^{(\beta+\frac{q}{\tau'})}\chi|\nabla
u|^{\frac{\alpha}{\tau}}dx\Big)^
{\frac{1}{m}}\Big(\int_{R\leq|x|\leq
2R}a(x)^{-\frac{\beta}{q}}b(x)^{-\frac{\beta\tau'}{q\tau}}\chi^{-\frac{\beta\tau'}{q}}
|\nabla \chi|^{n}dx\Big)^{\frac{1}{n}}.
\end{array}$$

Now, it follows from (\ref{n6}), that
$$\int_{R_0\leq|x|\leq 2R}a(x)^{\frac{1}{\tau'}}b(x)^{\frac{1}{\tau}}u^{(\beta+\frac{q}{\tau'})}\chi|\nabla u|^{\frac{\alpha}{\tau}}dx\leq
C\int_{R\leq|x|\leq
2R}a(x)^{-\frac{\beta}{q}}b(x)^{-\frac{\beta\tau'}{q\tau}}\chi^{-\frac{\beta\tau'}{q}}
|\nabla \chi|^{n}dx,$$ that is,

$$ \int_{R_0\leq|x|\leq
2R}a(x)^{\frac{1}{\tau'}}b(x)^{\frac{1}{\tau}}u^{(\beta+\frac{q}{\tau'})}\chi|\nabla
u|^{\frac{\alpha}{\tau}} dx\leq C\int_{R\leq|x|\leq
2R}a(x)^{-\frac{p-1}{(\tau'-1)(\alpha-\tau
p+\tau)}}b(x)^{-\frac{p-1}{\alpha-\tau p+\tau}}
\chi^{-\frac{\beta\tau'}{q}}|\nabla\chi|^ndx.
$$

Since
$$
\chi^{\frac{\beta\tau'}{q}}|\nabla\chi|^n\leq
CR^{-n}=CR^{-\frac{\alpha}{\alpha-\tau p+\tau}},
$$ it follows that
\begin{equation}
\label{n8}\int_{R_0\leq|x|\leq
2R}a(x)^{\frac{1}{\tau'}}b(x)^{\frac{1}{\tau}}u^{(\beta+\frac{q}{\tau'})}|\nabla
u|^{\frac{\alpha}{\tau}} \chi dx\leq CR^{-\frac{\alpha}{\alpha-\tau
p+\tau}}\int_{R\leq|x|\leq
2R}[a(x)^{(\tau-1)}b(x)]^{-\frac{p-1}{\alpha-\tau p+\tau}}dx.
\end{equation}

Now, given $\theta\in(p-1,\alpha)$, we take
$\tau=\tau_{\theta}\in(1,{\alpha}/({p-1}))$ such that
$\theta=(p-1)\tau$. Thus,
$$R^{-\frac{\alpha}{\alpha-\tau
p+\tau}}\int_{R\leq|x|\leq 2R}[a(x)^{(\tau-1)}b(x)]^
{-\frac{p-1}{\alpha-\tau p+\tau}}dx=R^{\frac{\alpha}{\theta-\alpha}}
\int_{R\leq|x|\leq 2R}[a(x)^{\frac{\theta}{p-1}-1}
b(x)]^{\frac{p-1}{\theta-\alpha}}dx.
$$

So, making $R \to \infty$ in (\ref{n8}), it follows from the
hypothesis ($ii$) and the last equality, that
$$\int_{|x|\geq R_0}a(x)^{\frac{1}{\tau'}}b(x)^{\frac{1}{\tau}}u^{(\beta+\frac{q}{\tau'})}|\nabla
u|^{\frac{\alpha}{\tau}}  dx< \infty.
$$

Now, making the same arguments after (\ref{n52}) we arrive in a
contradiction. So, we have finished the proof of theorem 1.2.\hfill
$\Box$

The next result, is a byproduct of Theorems \ref{teo2} and
\ref{teo4}. As a novelty there, in the existence issue, we have the
presence of the term $b$ that can change of sign. The case $b=0$ is
very studied. See for instance \cite{ye} with $p=2$
and \cite{JJ} for $1<p<\infty$ and references therein.

\begin{corollary}\label{cor3}
Assume that $a,b \in \L^{\infty}_{loc}(\mathbb{R}^N)$ with $a \geq
0$ and $\liminf_{t\to\infty}{f(t)}/{t^q}>0$ for some $q>p-1$. Then
the problem
\begin{equation} \label{b0}
 \left\{
\begin{array}{c}
 \Delta_pu = a(x)f(u) + b(x),\ \mbox{in}\ \mathbb{R}^N,\\
u \geq 0 ~\mbox{on}~ \mathbb{R}^N, ~~
u(x)\stackrel{|x|\to\infty}{\longrightarrow}\infty,
\end{array}
\right.
\end{equation}
admits:
\begin{enumerate}
  \item [$(i)$] at least one solution, if $\int_0^\infty \big(r^{1-N}\int_0^rs^{N-1}\displaystyle\max_{\vert x \vert=s}\{a(x),b^+(x)\}ds\big)^{\frac{1}{p-1}}dr<\infty$ holds, and
$a$ satisfies $(a_\Omega)$,
  \item [$(ii)$] no solution, if $b \geq 0$, $p < N$,
  $\int_1^\infty(r^{1-N}\int_0^rs^{N-1-\varepsilon}\displaystyle \min_{\vert x
  \vert=s}a(x)ds)^{\frac{1}{p-1}}dr=\infty$ and  there exists  $\lim_{|x|\to\infty}|x|^{p-\varepsilon}a(x)$, for some
  $\varepsilon\in(0,N-1)$.
\end{enumerate}
\end{corollary}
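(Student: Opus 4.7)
For part $(i)$, my plan is to verify the hypotheses of Theorem \ref{teo2} and apply it directly. Problem (\ref{b0}) is precisely (\ref{eq1}) with $\alpha=0$ and $\mu=1$, so the gradient term reduces to $b(x)$. Setting $\rho(x)=\max\{a(x),b^+(x)\}$, the integral condition in $(i)$ is the classical Serrin--Zou criterion for the solvability of $(P_\rho)$. The explicit radial candidate
$$
z(|x|)=\int_{|x|}^{\infty}\Big(t^{1-N}\int_0^{t}s^{N-1}\hat\rho(s)\,ds\Big)^{1/(p-1)}dt
$$
belongs to $C^1(\mathbb{R}^N)$, is positive, tends to zero at infinity and satisfies $-\Delta_p z\geq\rho(x)$ distributionally, providing the super solution required by $(P)_\rho$. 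Together with the assumption $(a_\Omega)$, Theorem \ref{teo2} then delivers the desired $u\in C^1(\mathbb{R}^N)$.

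For part $(ii)$, I would argue by contradiction and invoke Theorem \ref{teo4}. If $u\in C^1(\mathbb{R}^N)$ solved (\ref{b0}), then since $b\geq 0$ it would also satisfy the inequality $\Delta_p u\geq a(x)f(u)$, and hence solve (\ref{non}). It remains to check the annular condition $(i_1)$ of Theorem \ref{teo4}. Using the existence of $L=\lim_{|x|\to\infty}|x|^{p-\varepsilon}a(x)$, the potential $a$ is of order $|x|^{\varepsilon-p}$ at infinity, whence
$$
R^{pq/(\theta-q)}\int_{R\leq|x|\leq 2R}a(x)^{\theta/(\theta-q)}\,dx\;\asymp\;R^{[q(p-N)+\theta(\varepsilon+N-p)]/(\theta-q)}.
$$
Since $\varepsilon>0$, the exponent is non-positive provided $\theta\geq q(N-p)/(\varepsilon+N-p)$, and the interval $(\max\{p-1,\,q(N-p)/(\varepsilon+N-p)\},\,q)$ is nonempty because $\varepsilon>0$ and $q>p-1$. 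Choosing such a $\theta$ verifies $(i_1)$, so Theorem \ref{teo4} produces the contradiction.

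The main obstacle is translating the weighted radial divergence hypothesis $\int_1^\infty(r^{1-N}\int_0^rs^{N-1-\varepsilon}\min_{|x|=s}a(x)\,ds)^{1/(p-1)}dr=\infty$ into a pointwise asymptotic on $a$ suitable for the power-counting above. The existence of $\lim_{|x|\to\infty}|x|^{p-\varepsilon}a(x)$ is the crucial bridge: it rules out oscillatory or exceptionally fast decay of $a$, pinning down its asymptotic order and legitimizing the annular estimate. A secondary check is that the $b>0$ hypothesis in Theorem \ref{teo4}$(i)$ is inessential for case $(i_1)$, because the positivity of $b$ enters only through the nonnegative integral $\int b(x)|\nabla u|^\alpha u^\beta\chi\,dx$ that is discarded in the derivation; the argument then applies verbatim when $b\geq 0$.
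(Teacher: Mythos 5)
Your overall strategy coincides with the paper's: for part $(i)$ one feeds the Serrin--Zou radial supersolution of $(P_{\rho})$ into the sub/supersolution machinery with $\alpha=0$ and $\mu=1$, and for part $(ii)$ one argues by contradiction via Theorem \ref{teo4}$(i_1)$ after pinning down the decay rate of $a$. However, one step in part $(i)$ does not work as you state it: Theorem \ref{teo2} cannot be applied \emph{directly}. Its hypothesis $(P)_{\rho}$ requires, when $b^{+}\neq 0$, that $\alpha=p-1$ together with $z\in W^{1,\infty}(\mathbb{R}^N)$, whereas here $\alpha=0\neq p-1$ since $p>1$; moreover the conclusion of Theorem \ref{teo2} only guarantees solvability for $0\leq\mu<\mu^{*}$ with $\mu^{*}$ possibly finite, while you need the specific value $\mu=1$. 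The repair --- and this is exactly what the paper does --- is to rerun the proofs of Theorem \ref{teo1} and of Case 1 of Theorem \ref{teo2} with $\epsilon=\alpha=0$ and $\mu=1$, observing that for $\alpha=0<p-1$ Lemma \ref{lema1} gives $\Lambda_{*}=\infty$ and only requires $z\in C^{1}(\mathbb{R}^N)$ (the factor $\Vert\nabla z\Vert_{\infty}^{-\alpha}$ equals $1$), so neither the threshold nor the $W^{1,\infty}$ regularity is an obstruction. Your proposal names the right ingredients but skips this adaptation, which is the actual content of the paper's proof of $(i)$.

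For part $(ii)$ your power counting agrees with the paper's, but writing $a(x)\asymp|x|^{\varepsilon-p}$ tacitly assumes $\lim_{|x|\to\infty}|x|^{p-\varepsilon}a(x)\in(0,\infty)$. The case where this limit is $+\infty$ is harmless, since the exponent $\theta/(\theta-q)$ is negative and a larger $a$ only improves the annular estimate; but the case where the limit equals $0$ is precisely where the divergence of $\int_1^{\infty}\bigl(r^{1-N}\int_0^{r}s^{N-1-\varepsilon}\min_{|x|=s}a\,ds\bigr)^{1/(p-1)}dr$ must be invoked, as the paper does, to rule out decay strictly faster than $|x|^{-\delta}$ for every $\delta\in(p-\varepsilon,p)$ and thus recover the needed lower bound on $a$. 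You correctly identify this as ``the crucial bridge'' but leave it unexecuted. Your observation that the strict positivity of $b$ in Theorem \ref{teo4} is immaterial for branch $(i_1)$, because the corresponding integral is nonnegative and discarded, is correct and matches what the paper implicitly uses.
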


\noindent \textbf{Proof.} Assume $(i)$ holds. In this case, we know
that problem $(P_\rho)$, with $\rho(x)=\max\{a(x),b^+(x)\}$, $x \in
\mathbb{R}^N$, has a solution in $C^{1}(\mathbb{R}^{N})$. So, we
adapt the proof of Theorem \ref{teo1}, taking $\epsilon=\alpha=0$
and $\mu=1$, for the problem
$$
\left\{
\begin{array}{cc}
\Delta_p u = a(x) f(u)+b(x)~~ \mbox{in}~~B_n ,\\
u \geq 0~~ \mbox{in}~~  B_n,~~~ u(x) \stackrel{|x| \to
\partial B_n}{\longrightarrow} \infty.
\end{array}
\right.
$$

We point out that in this case, we can use a comparison principle of
Tolksdorf \cite{PT}, instead of that in \cite{GT}. Now, we apply the
proof of Case 1 of Theorem \ref{teo2}, taking again $\mu=1$. So
following the same proceedings, we get the problem (\ref{b0}) has a
solution.

Suppose $(ii)$ holds. Since
$$\int_1^\infty(r^{1-N}\int_0^rs^{N-1-\varepsilon}\displaystyle
\min_{\vert x
  \vert=s}a(x)ds)^{\frac{1}{p-1}}dr=\infty$$
holds, it follows by a direct computation that
$\lim_{|x|\to\infty}|x|^{\delta}a(x)=+
 \infty$, for all $p-\varepsilon < \delta <p$. Otherwise, there would be a $p-\varepsilon < \delta_o \leq p$
 such that
 $$\int_1^\infty\Big(r^{1-N}\int_0^rs^{N-1-\varepsilon}\min_{\vert x
  \vert=s}a(x)ds\Big)^{\frac{1}{p-1}}dr \leq C \int_1^{\infty} s^{(1-\varepsilon-\delta_o)/(p-1)}ds< \infty$$
 for some $C>0$. That is impossible by hypothesis.

 So, there exists a $R_0>0$ and
$\theta=\theta_{\varepsilon}\in({p-1},{q})$ such that
$N+({pq-\delta\theta})/({\theta-q})<0$ and
$a(x)^{{\theta}/({\theta-q})}\leq
C|x|^{-{\alpha\theta}/{(\theta-q)}}\ \ \mbox{for}\ |x|>R_0.$

Now, just computing, we get
$$
\limsup_{R\to\infty}R^{\frac{pq}{\theta-q}}\int_{R\leq|x|\leq
2R}a(x)^{\frac{\theta}{\theta-q}}dx=0<\infty.
$$
So, from Theorem \ref{teo4}, we have the claimed. \hfill $\Box$

\begin{corollary}\label{cor2} Assume  $\rho \in C(\mathbb{R}^N)$ with $\rho>0$  on
 $\mathbb{R}^N $ is such that
\begin{equation}
\label{cor5}
\limsup_{R\to\infty}R^{\frac{p}{\theta-1}}\int_{R\leq|x|\leq
2R}\rho(x)^{\frac{\theta}{\theta-1}}dx<\infty,\ \mbox{for some}\
\theta\in(0,1)
\end{equation}
holds. Then $(P_{\rho})$ has no solution in $C^1(\mathbb{R}^N)$.
\end{corollary}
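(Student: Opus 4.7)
The plan is to argue by contradiction, adapting the Mitidieri--Pohozaev style test function technique used in Theorem \ref{teo4} to the dual setting where $w$ decays rather than blows up. Suppose $w \in C^1(\mathbb{R}^N)$ solves $(P_\rho)$, so $w > 0$ everywhere. I would use as test function $\varphi = w^{-\beta}\xi_R^\mu$, where $\xi_R$ is the same cut-off as in Theorem \ref{teo4} (equal to $1$ on $B_R$, supported in $B_{2R}$, with $|\nabla\xi_R|\le 1/R$), and where $\beta = (p-1)/(1-\theta)$ and $\mu \ge p/(1-\theta)$ are chosen to match the hypothesis. Substituting into the weak formulation of $-\Delta_p w = \rho$, the two terms coming from $\nabla \varphi = -\beta w^{-\beta-1}\nabla w\,\xi_R^\mu + \mu w^{-\beta}\xi_R^{\mu-1}\nabla \xi_R$ produce a gradient term that can be absorbed via Young's inequality with exponents $p/(p-1)$ and $p$, yielding
\[
\int \rho\, w^{-\beta}\xi_R^\mu\,dx \;\le\; C\int_{R\le|x|\le 2R} w^{p-1-\beta}\,\xi_R^{\mu-p}|\nabla \xi_R|^p\,dx.
\]

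The identity $p-1-\beta = -\beta\theta$ (which is exactly what the choice $\beta = (p-1)/(1-\theta)$ delivers) lets me factor $w^{p-1-\beta} = (w^{-\beta})^\theta$ and apply Hölder with conjugate exponents $1/\theta$ and $1/(1-\theta)$, pairing $(\rho w^{-\beta}\xi_R^\mu)^\theta$ with $\rho^{-\theta}\xi_R^{\mu-p-\mu\theta}|\nabla\xi_R|^p$. Using $\xi_R \le 1$, $|\nabla\xi_R| \le 1/R$, and $\mu \ge p/(1-\theta)$, the second factor is bounded by $B_R := R^{p/(\theta-1)}\int_{R\le|x|\le 2R}\rho^{\theta/(\theta-1)}\,dx$, which is bounded by the hypothesis (\ref{cor5}). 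Setting $A_R := \int \rho w^{-\beta}\xi_R^\mu\,dx$ and $T_R := \int_{R\le|x|\le 2R}\rho w^{-\beta}\xi_R^\mu\,dx$, the combined estimate reads
\[
A_R \;\le\; C\, T_R^{\theta}\, B_R^{1-\theta}.
\]

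Since $T_R \le A_R$, this first gives $A_R \le C' B_R \le M$ uniformly in $R$, so $\int_{\mathbb{R}^N}\rho w^{-\beta}\,dx = \lim_R A_R < \infty$. But then the tail $T_R \to 0$ as $R\to\infty$, and reinserting this into $A_R \le C T_R^\theta B_R^{1-\theta}$ forces $\int_{\mathbb{R}^N}\rho w^{-\beta}\,dx = 0$, contradicting $\rho>0$ and $w>0$. The main obstacle I anticipate is purely bookkeeping: tracking the algebraic identities that make the choices $\beta = (p-1)/(1-\theta)$ and $\mu \ge p/(1-\theta)$ simultaneously cancel the $w$-exponent in the Hölder split, render the $\xi_R$-exponent nonnegative, and produce the precise powers of $R$ and $\rho$ appearing in the hypothesis. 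One should also note that $p<N$ is not required by the argument itself --- if $p\ge N$ the conclusion is immediate since $(P_\rho)$ already has no solution for any nonnegative $\rho$ (Serrin--Zou \cite{serin}).
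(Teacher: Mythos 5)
Your proof is correct, but it takes a genuinely different route from the paper's. The paper argues indirectly: assuming $(P_{\rho})$ has a $C^1$ solution, it invokes the proof of Corollary \ref{cor3} (with $b=0$, $f(t)=t^q$ and $q>(p-1)/\theta>p-1$) to manufacture a blow-up solution of $\Delta_p v=\rho(x)v^q$ in $\mathbb{R}^N$, and then checks that with $\tilde{\theta}=q\theta\in(p-1,q)$ the hypothesis (\ref{cor5}) becomes exactly condition $(i_1)$ of Theorem \ref{teo4} (since $pq/(\tilde{\theta}-q)=p/(\theta-1)$ and $\tilde{\theta}/(\tilde{\theta}-q)=\theta/(\theta-1)$), which forbids such a solution. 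You instead run the Mitidieri--Pohozaev scheme directly on $-\Delta_p w=\rho$ with the test function $w^{-\beta}\xi_R^{\mu}$, $\beta=(p-1)/(1-\theta)$; this works because the term $-\beta\int|\nabla w|^{p}w^{-\beta-1}\xi_R^{\mu}\,dx$ moves to the left-hand side with a favorable sign and absorbs the Young remainder, and your exponent bookkeeping ($p-1-\beta=-\beta\theta$, the $\xi_R$-exponent $\mu-p/(1-\theta)\geq0$, and $|\nabla\xi_R|^{p/(1-\theta)}\leq R^{p/(\theta-1)}$) is consistent and reproduces precisely the quantity in (\ref{cor5}). Your argument is more self-contained---it bypasses the entire sub/supersolution and approximation machinery that Corollary \ref{cor3} rests on---and in fact proves slightly more: it rules out positive $C^1$ supersolutions of $-\Delta_p w\geq\rho$ and never uses the decay of $w$ at infinity. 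What the paper's detour buys is brevity given everything already established, and it exhibits the duality between solvability of $(P_{\rho})$ and existence of blow-up solutions around which the paper is organized. One point worth making explicit in your write-up: $w^{-\beta}\xi_R^{\mu}$ is an admissible test function because $w$ is continuous and positive, hence bounded below by a positive constant on $\overline{B}_{2R}$, so $w^{-\beta}\xi_R^{\mu}$ is a compactly supported $C^1$ function.
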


\noindent\begin{proof} Consider a $\theta \in (0,1)$ satisfying
(\ref{cor5}). So, taking a $q>(p-1)/{\theta}>p-1$ and admitting that
problem $(P_{\rho})$ admits a solution in $C^1(\mathbb{R}^N)$, it
follows from the proof of Corollary 5.1 with $b=0$ and $f(t)=t^q$
that the problem
\begin{equation}
\label{1.53}
 \left\{
\begin{array}{c}
 \Delta_pv=\rho(x) v^q\ \ \mbox{in}\ \mathbb{R}^N,\\
v>0\ \mbox{in}\ \mathbb{R}^N,\ u(x)\stackrel{|x|
\to\infty}{\longrightarrow}\infty,
\end{array}
\right.
\end{equation}
has a solution in $C^1(\mathbb{R}^N)$.

Now, defining $\tilde{\theta}=q\theta$, we have
$\tilde{\theta}\in(p-1,q)$ and
$$\limsup_{R\to\infty}R^{\frac{pq}{\tilde{\theta}-q}}\int_{R\leq|x|\leq2R}\rho(x)^{\frac{\tilde{\theta}}{\tilde{\theta}-q}}dx=\limsup_{R\to\infty}R^{\frac{p}{\theta-1}}\int_{R\leq|x|\leq
2R}\rho(x)^{\frac{\theta}{\theta-1}}dx<\infty.$$ So,  by Theorem
1.2, it follows that problem (\ref{1.53}) does not have solution in
$C^1(\mathbb{R}^N)$, but this a contradiction. \hfill $\Box$
\end{proof}

 As examples of non-null and non-negative
potentials $\rho$ and $b$ satisfying Theorem \ref{teo4} and
Corollary $\ref{cor2}$ ($\rho=a$ in Theorem \ref{teo4}),  we have
(the first two cases below satisfy Theorem \ref{teo4}-$(i_1)$ and
Corollary $\ref{cor2}$ and third case satisfies Theorem
\ref{teo4}-$(i_2)$).
\begin{enumerate}
  \item [$(i)$]$\rho\in
L^\infty_{loc}(\mathbb{R}^N)$ such that
$\liminf_{|x|\to\infty}|x|^\delta\rho(x)>0$ for some either $\delta
< p$ or $\delta=p\geq N$,
 \item [$(ii)$] $\rho:\mathbb{R}^N \to [0,\infty)$ is continuous function with
 $\lim_{\vert x \vert \to \infty} \vert x \vert^{p-\varepsilon}\tilde{\rho}(\vert x \vert)\geq 0$ satisfying
$$
 \int_1^\infty
 \Big(r^{1-N}\int_0^rs^{N-1-\varepsilon}\tilde{\rho}(s)ds\Big)^{\frac{1}{p-1}}dr=\infty,
$$
 for some $\varepsilon \in (0,N-1)$, where $\tilde{\rho}(r)=\min_{\vert x\vert=r}\rho(x)$ and $1<p<N$,
 \item [$(iii)$]  $b\in
L^\infty_{loc}(\mathbb{R}^N)$ is such that
$\liminf_{|x|\to\infty}|x|^\delta b(x)>0$ for some
 $-\infty<\delta<N-{\alpha(N-1)}/{(p-1)}$.
\end{enumerate}


\end{document}